\tikzset{
   smhex/.style={shape=regular polygon,regular polygon sides=6, minimum size=0.5cm, draw, inner sep=0},
    smbox/.style={shape=rectangle, minimum size=0.5cm, draw, inner sep=0},
   hexa/.style= {shape=regular polygon,regular polygon sides=6, minimum size=1cm, draw, inner sep=0}
}
\newcommand{\hexcol}[3]{
    \foreach \j in {1,...,#3}{
        \node[smhex] at ({(0.75 * #1},
                {(#2 + \j)*sin(60)}) {};}
}
\newtheorem{thm}[equation]{Theorem}
\newtheorem{lem}[equation]{Lemma}
\newtheorem{prop}[equation]{Proposition}
\newtheorem{cor}[equation]{Corollary}
\theoremstyle{definition}
\newtheorem{defn}{Definition}
\newtheorem{rmk}[equation]{Remark}
\newtheorem*{thm*}{Theorem}
\newtheorem*{cor*}{Corollary}
\newtheorem*{lem*}{Lemma}
\numberwithin{equation}{section}
\title{Parity Property of Hexagonal Sliding Puzzles}
\author[R.~Karpman]{Ray Karpman}
\author[E.~Rold\'an]{\'Erika Rold\'an}
\subjclass[2020]{00A08, 05A20, 05C25, 05A05, 05B45, 05B50, 05D9}
\keywords{The 15 Puzzle, sliding puzzles, configuration spaces, even permutation, alternating group, God's number, solvability of puzzles, breadth-first search}
\begin{document}

\maketitle

\begin{abstract}
We study the puzzle graphs of hexagonal sliding puzzles of various shapes, and with various numbers of holes. The puzzle graph is a combinatorial model which captures the solvability and the complexity of sequential mechanical puzzles. Questions relating to the puzzle graph have been previously studied and resolved for the 15 Puzzle which is the most famous, and unsolvable, square sliding puzzle of all time.  It is known that for square puzzles such as the 15 Puzzle, solvability depends on a parity property that splits the puzzle graph into two components. In the case of hexagonal sliding puzzles we get more interesting parity properties that depend on the shape of the boards and on the missing tiles or holes on the board. We show that for large-enough hexagonal, triangular, or parallelogram-shaped boards with hexagonal tiles, all puzzles with three or more holes are solvable. For puzzles with two or more holes, we give a solvability criterion involving both a parity property, and the placement of tiles in \emph{tight corners} of the board. The puzzle graph is a discrete model for the configuration space of hard tiles (hexagons or squares) moving on different tessellation based domains. Understanding the combinatorics of the puzzle graph could lead to understanding some aspects of the topology of these configuration spaces. 
\end{abstract}

\section{Introduction}

Sliding puzzles are sequential mechanical puzzles which are solved by sliding certain pieces on a fixed board from a starting configuration to a target final configuration. We focus here on sliding puzzles whose boards consist of a finite subset of tiles of the square or hexagonal regular tessellations of the plane. If the board is completely covered by tiles, squares or hexagons depending on the selected tessellation, then the tiles can't move at all. In this case, any sliding puzzle defined on that board is clearly unsolvable. If some tiles of the board are removed, then holes are created and it becomes possible to slide some of the tiles; as a consequence, some sliding puzzles become solvable. Here, we are interested in determining which sliding puzzles on a given board are solvable, for boards of various shapes and with various number of holes. This problem has previously been solved for a large family of square sliding puzzles \cite{johnson1879notes, story1879notes, wilson1974graph}. However, the answers for boards with hexagonal tiles were previously unknown.


The majority of sliding puzzles that have been physically created and commercialized have rectangular shaped boards and square tiles. However, there also exist more general \textit{sliding block puzzles} with the pieces unlabeled, or with the pieces consisting of other very simple small polyominoes such as dominoes, triominoes, or tetrominoes. Edward Hordern had the world-wide biggest collection of sliding block puzzles that is now part of \textit{the Puzzle Museum} collection \cite{puzzle}. In 1986 \cite{hordern1986sliding}, Hordern wrote a wonderful book describing more than 250 sliding block puzzles of his collection.

With very few exceptions, for sliding puzzles with labeled unit square tiles, if there is exactly one removed tile from the board, then a parity property determines if a puzzle is solvable or not; with two or more tiles removed from the board, all sliding puzzles defined on the board are solvable. This was proved for the first time during the decade of the 1880's \cite{johnson1879notes, story1879notes}, when mathematicians and the general public got attracted, obsessed, and even some times traumatized, by the most famous (unsolvable) sliding puzzle of all times: The 15 Puzzle \cite{slocum200615}. 

The 15 Puzzle consists of a $4 \times 4$ square board divided into 16 unit squared with 15 square tiles labeled from 1 to 15 placed on the board, and with one empty square. The initial configuration of the tiles in the 15 Puzzle has the tiles 1 to 13 placed in ascendant order (from left to right and top to bottom), and the tiles 14 and 15 interchanged. The target configuration of this puzzle leaves the first 13 ordered tiles fixed and interchanges the 14 and 15 tiles---see Figure \ref{classicpuz}.

\begin{figure}
    \centering
    \begin{tikzpicture}
    \node[smbox] at (0,0) {};
    \foreach \j in {1,...,3}{
        \node[smbox] at ({0.5*\j}, 0) {\j};
    }
    \foreach \j in {4,...,7}{
        \node[smbox] at ({0.5*(\j-4)}, -0.5) {\j};
    }
    \foreach \j in {8,...,11}{
        \node[smbox] at ({0.5*(\j-8)}, -1) {\j};
    }
    \foreach \j in {12,13}{
        \node[smbox] at ({0.5*(\j-12)}, -1.5) {\j};
    }
    \foreach \j in {14,15}{
        \node[preaction={fill=gray!30}, smbox] at ({0.5*(\j-12)}, -1.5) {\j};
    }
    \begin{scope}[xshift=3 cm]
        \node[smbox] at (0,0) {};
    \foreach \j in {1,...,3}{
        \node[smbox] at ({0.5*\j}, 0) {\j};
    }
    \foreach \j in {4,...,7}{
        \node[smbox] at ({0.5*(\j-4)}, -0.5) {\j};
    }
    \foreach \j in {8,...,11}{
        \node[smbox] at ({0.5*(\j-8)}, -1) {\j};
    }
    \foreach \j in {12,13}{
        \node[smbox] at ({0.5*(\j-12)}, -1.5) {\j};
    }
    \foreach \j in {14,15}{
        \pgfmathsetmacro\tile{int(29-\j)}
        \node[preaction={fill=gray!30}, smbox] at ({0.5*(\j-12)}, -1.5) {\tile};
    }
    \end{scope}
    \end{tikzpicture}
    \caption{The most famous sliding puzzle is the 15 Puzzle that with our notation is represented by $R_{\square}(1;4)$. It is not possible to transform the puzzle at right to the one at left by sliding the tiles.} 
    \label{classicpuz}
\end{figure}

In 1879 \cite{johnson1879notes}, W. Johnson proved, using a parity argument, that the 15 Puzzle has no solution. He showed that if one starts from a specific configuration, it is impossible to reach any other configuration that is obtained by applying an \emph{odd permutation} to the labels of the tiles. Recall that a permutation is \emph{even} if it can be written as a product of an even number of transpositions--that is, of permutations which simply interchange two labels. A permutation is \emph{odd} if it can be written as a product of an odd number of transpositions. Since every permutation is a product of transpositions, every permutation is either even or odd.

Johnson's proof works board in the shape of an $m_1 \times m_2$ rectangle with $m_1,m_2 \geq 2$ that has exactly one missing tile and the rest of the tiles being unit squares labeled with consecutive integers. We say that a board (with the respective tiles removed) has the \emph{weak parity property} if we cannot reach any configuration corresponding to an odd permutation of the tiles by performing permitted slides. Hence any rectangular board with one tile removed has the weak parity property.

Complementing Johnson's analysis, W.E. Story proved in the second part of the same paper  \cite{story1879notes}, that starting from a fixed configuration \textit{all} configurations that can be obtained by an even permutation of the tiles will lead to a puzzle that has a solution. We say a  board (with the respective tiles removed) has the \emph{strong parity property} if we are able to obtain all configurations that require an even number of transpositions of tiles from a fixed starting configuration.

Putting together the results by Johnson and Story tells us that it is possible to solve a sliding puzzle on a rectangular shaped board with square tiles and exactly one tile missing, if and only if the following holds: if we slide tiles of the starting configuration so that the missing tile is in the same position as in the target, the permutations of tiles in the starting and the final configurations have the same parity. In other words, Johnson and Story proved that the set of all configurations is partitioned in two big sets that are determined by parity and that this partition characterizes when a puzzle, a given started and final configuration, has a solution or not. A further natural question then arises: how many more tiles need to be removed from board for all sliding puzzles defined on it to be solvable. Trivially, for a rectangular shaped board with unit square tiles it suffices to remove two tiles. We call a board with the respective tiles removed needed to have this property to be \textit{maximally connected}.


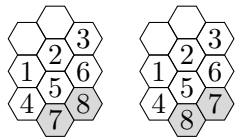
\begin{figure}
    \centering
    \begin{tikzpicture}
    [scale=0.5]
    \foreach \i in {0,1}{
        \node[smhex] at (\i*0.75,{\i*0.5*sin(60)}) {};
    }
    \foreach \i in {1,...,3}{
        \node[smhex] at ({(\i-1)*0.75}, {((\i-3)*0.5*sin(60)}) {\i};
    }
    \foreach \i in {4,...,6}{
        \node[smhex] at ({(\i-4)*0.75}, {((\i-8)*0.5*sin(60)}) {\i};
    }
    \node[preaction={fill=gray!30}, smhex] at (0.75, {-2.5*sin(60)}) {7};
     \node[preaction={fill=gray!30}, smhex] at (1.5, {-2*sin(60)}) {8};
   \begin{scope}[xshift=3.5cm]
    \foreach \i in {0,1}{
        \node[smhex] at (\i*0.75,{\i*0.5*sin(60)}) {};
    }
    \foreach \i in {1,...,3}{
        \node[smhex] at ({(\i-1)*0.75}, {((\i-3)*0.5*sin(60)}) {\i};
    }
    \foreach \i in {4,...,6}{
        \node[smhex] at ({(\i-4)*0.75}, {((\i-8)*0.5*sin(60)}) {\i};
    }
    \node[preaction={fill=gray!30}, smhex] at (0.75, {-2.5*sin(60)}) {8};
    \node[preaction={fill=gray!30}, smhex] at (1.5, {-2*sin(60)}) {7};
    \end{scope}
    \end{tikzpicture}
    \caption{We prove that is not possible to transform the puzzle at right into the puzzle at left by sliding tiles.}
    \label{trimmedpuz}
\end{figure}

In this paper, we analyze the weak and strong parity properties, and maximally connectivity of boards with different shapes with hexagonal tiles---see Figure \ref{boards} for some examples of these boards. Of the families of hexagonal boards that we study here, only one family, parallelogram shaped boards with hexagonal tiles, have previously been define and studied by H. Alpert \cite{alpert2020discrete}. In her paper, Alpert analyzes, with respect of the size of the boards, asymptotically how fast these parallelogram shaped hexagonal sliding puzzles can be solved. For this, she first proved that these family of parallelogram boards are maximally connected, for sizes bigger than $5 \times 5$,  with 6 or more hexagonal tiles remove and at least two of the removed hexagonal tiles sharing an edge. Thus, Alpert studies only maximally connected boards with hexagonal tiles. With respect to this particular family of boards, we prove here that maximal connectivity is reached with 3 or more hexagonal tiles removed (even for small sized parallelogram boards), and that they have the weak parity property (but not the strong parity property) with two tiles removed. Thus, we show that, unlike the 15 Puzzle or more general square, the parallelogram sliding puzzles never have the strong parity property. 

 Given a specific sliding puzzle on a board, we can have a sense on how difficult it will be to solve it by knowing the minimum number of single slides required to get from the starting to the targeted final configuration of the puzzle. This defines a distance on the set of configurations and a natural way of measuring the complexity of a sliding puzzle. With this perspective, the \textit{most difficult puzzles} that can be defined on a given board are those that require more slides to be solved. This is captured by the \textit{God's number} which is the maximum of the distances between configurations. 

In the specific case of the board of the 15 Puzzle, its God's number has been computed and it is equal to 80 \cite{brungger1999parallel}. A well known open problem is to determine God's number for bigger square shaped (or rectangular shaped) boards. The problem of finding the God's number of squared shaped boards with labeled square tiles is NP-Hard \cite{ratner1986finding}. Here, we also give bounds for the God's number of some hexagonal sliding puzzles. 
In the rest of this section we give precise statements of our main results.  
\subsection{Main Results}
\begin{defn}
    Let $B(h)$ denote the board $B$ with $h$ holes, that is, with $h$ unoccupied positions of the board. We define the puzzle graph of $B(h)$, which we denote as $puz[B(h)]$, as the graph that contains as vertices each one of the possible placements (or configurations) of the labeled tiles that are on $B$ and that has an edge between two vertices whenever it is possible to go from one configuration to the other by sliding one tile into a hole.
\end{defn}

If two tiles are removed and share an edge, topologically both together would count as only one hole. Nevertheless, we count each unoccupied position of the board as a distinct hole. When the number of holes does not need to be specified we will abuse notation and denote the board with any number of holes simply by $B$.

\begin{defn}
We say a configuration of a board $B$ is \emph{isolated} if no slides are possible. A configuration is \emph{non-isolated} if there is at least one tile that can slide.
\end{defn}

For a board with squares tiles, isolated configurations are only possible for boards with no holes, as it is always possible to slide a square tile into a hole that shares an edge with the tile. For a board whose tiles are hexagons, a tile can only be slide to a neighboring empty hexagonal hole if and only if there are two adjacent empty hexagons that are at the same time sharing an edge with the tile to move---see Figure \ref{fig:my_label}.

\begin{figure}
    \centering
    \begin{tikzpicture}
    \node[hexa] at (0,0) {$a$};
    \node[preaction={fill=gray!30}, hexa] at (0.75, {sin(60)*0.5}) {};
    \node[preaction={fill=gray!30}, hexa] at (0.75, {-sin(60)*0.5}) {};
    \begin{scope}[xshift=2.75cm]
    \node[preaction={fill=gray!30}, hexa] at (0,0) {};
    \node[preaction={fill=gray!30}, hexa] at (0.75, {sin(60)*0.5}) {};
    \node[preaction={fill=gray!30}, hexa] at (0.75, {-sin(60)*0.5}) {};
    \node[preaction={fill=white}, hexa] at (0.5,0) {$a$};
    \end{scope}
    \begin{scope}[xshift=5.5cm]
    \node[preaction={fill=gray!30}, hexa] at (0,0) {};
    \node[hexa] at (0.75, {sin(60)*0.5}) {$a$};
    \node[preaction={fill=gray!30}, hexa] at (0.75, {-sin(60)*0.5}) {};
    \end{scope}
    \end{tikzpicture}
    \caption{As noticed in \cite{alpert2020discrete}, an hexagonal tile which is adjacent to a pair of holes may slide into either neighboring hole.}
    \label{fig:my_label}
\end{figure}
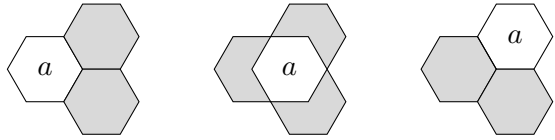

\begin{defn}
    We say that $puz[B(h)]$ has the \emph{weak parity property} if whenever two non-isolated configurations with the holes in the same positions are in the same connected component, then one can be obtained from the other by an even permutation of the tiles.
\end{defn}
\begin{defn}
We say that $puz[B(h)]$ has the \emph{strong parity property} if it has the weak parity property and its puzzle graph has exactly two components containing all (non-isolated) configurations.
\end{defn}
Hence $puz[B(h)]$ has the strong parity property if two non-isolated configurations of $B$ are in the same connected component of the puzzle graph exactly when one can be obtained from the other by an even permutation of the tiles.

\begin{defn}
We say the puzzle graph of $B(h)$ is \emph{maximally connected} if it has one large connected component containing all non-isolated configurations. We call the minimum number $h$ such that the puzzle graph of $B(h)$ is maximally connected the \emph{connection number} of $puz[B(h)]$.
\end{defn}

 We will denote the $m_1 \times m_2$ square grid board as $R_{\square}(h;m_1,m_2)$, were $h$ represents the number of holes. In the particular case when $m=m_1=m_2$ we simplify the notation by $R_{\square}(h;m)$. 

We will denote the $m_1 \times m_2$ parallelogram hexagonal board as $P_{\varhexagon}(h;m_1,m_2)$, were $h$ represents the number of holes. In the particular case when $m=m_1=m_2$ we simplify the notation by $P_{\varhexagon}(h;m)$. 

We investigate parallelogram boards of all sizes, and with varying numbers of holes. We note that the $1 \times m_2$ square board $P_{\square}(1:1,m_2)$ has $(m_2-1)!$ connected components, one for each possible permutation of $m_2-1$ tiles, as we can only slide the tiles back and forth without changing their order. It is natural to ask if there exists an hexagonal board shape for hexagonal sliding puzzles that exhibits analogous behaviour. We find that this is the case for $P_{\varhexagon}(h;2,m_2)$ boards.

\begin{figure}
    \centering
    \begin{tikzpicture}
    [scale=0.5]
    \foreach \i in {1,...,2}{
        \pgfmathsetmacro{\start}{-1-0.5*\i}
        \hexcol{\i}{\start}{3}
    }
    \begin{scope}[xshift=2.75cm]
    \foreach \i in {1,...,3}{
        \pgfmathsetmacro{\start}{-1-0.5*\i}
        \hexcol{\i}{\start}{4}
    }
    \end{scope}
    \begin{scope}[xshift=8.25cm]
    \foreach \i in {1,...,3}{
        \hexcol{\i}{-0.5 * \i}{\i}
    }
    \end{scope}
    \begin{scope}[xshift=11.75cm]
    \foreach \i in {1,...,4}{
        \hexcol{\i}{-0.5 * \i}{\i}
    }
    \end{scope}
    \begin{scope}[xshift=18cm]
        \hexcol{1}{-1}{2}
        \hexcol{2}{-1.5}{3}
        \hexcol{3}{-1}{2}
    \end{scope}
    \begin{scope}[xshift=21.5cm]
    \foreach \i in {1,...,3}{
        \pgfmathsetmacro{\start}{-1-0.5*\i}
        \pgfmathsetmacro{\stop}{\i+2}
        \hexcol{\i}{\start}{\stop}
    }
    \foreach \i in {1,2}{
        \pgfmathsetmacro{\colnum}{\i+3}
        \pgfmathsetmacro{\start}{0.5*\i-2.5}
        \pgfmathsetmacro{\stop}{5-\i}
        \hexcol{\colnum}{\start}{\stop}
    }
    \end{scope}
    \end{tikzpicture}
    \caption{Left to right: A $2 \times 3$ parallelogram, a $3 \times 4$ parallelogram, a $3 \times 3$ triangle, a $4 \times 4$ triangle, a flower with 2 layers, and a flower with 3 layers.}
    \label{boards}
\end{figure}
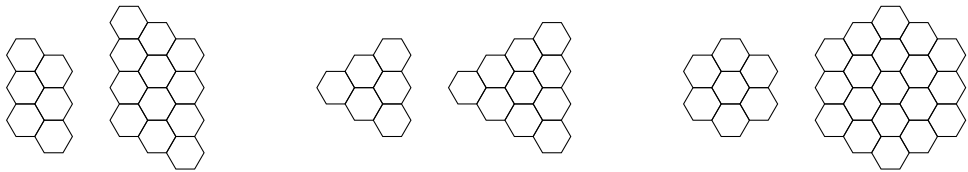

\begin{thm}\label{thm:skiny}
For $m_1 =2$ and $m_2 \geq 2$, $puz[P_{\varhexagon}(h; m_1,m_2)]$ has $(m_1m_2-h)!$ connected component containing non-isolated configurations. 
\end{thm}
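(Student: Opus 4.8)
The plan is to replace the geometry of the board by a one-dimensional combinatorial model and then to show that the relative order of the tiles is a complete invariant of the non-trivial components. First I would fix a \emph{zigzag enumeration} $v_1,\dots,v_N$ of the $N:=m_1m_2=2m_2$ cells of $P_{\varhexagon}(h;2,m_2)$, alternating between the two columns as one runs from one end of the board to the other. A direct check of the hexagonal adjacencies shows that $v_i$ and $v_j$ share an edge exactly when $|i-j|\le 2$; that is, the board graph is the square $P_N^2$ of a path, and its only triangles are the consecutive triples $\{i,i+1,i+2\}$. This is the structural fact that makes the thin hexagonal strip behave like the $1\times m_2$ strip mentioned in the introduction.

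Next I would analyze the moves in this model. By the sliding rule (see Figure~\ref{fig:my_label}), a tile can slide only when it lies in a triangle whose other two cells are both empty; since the triangles are exactly the triples $\{i,i+1,i+2\}$, every legal slide keeps one tile inside a single such triple while the remaining two cells stay empty. Consequently, reading the tile labels along $v_1,\dots,v_N$ and discarding the holes yields a \emph{linear} ordering of the $t:=N-h$ tiles that no slide can alter: a tile only ever moves past holes, never past another tile. Thus this tile-order is constant on each connected component, and two configurations with different orders lie in different components. Because every slide is reversible, the puzzle graph is undirected and the isolated configurations are precisely the singleton components; so it suffices to count the non-singleton components, grouped by the value of this invariant.

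Assume $2\le h\le N-1$, since otherwise no slide is possible and there are no non-isolated configurations. For the lower bound I would exhibit, for each of the $t!$ orderings of the labels, a single non-isolated configuration realizing it: place the tiles in the prescribed order and put two holes in adjacent cells next to a tile (for instance at $v_1,v_2$). As distinct orders force distinct components, this already produces at least $t!$ non-singleton components.

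The heart of the argument is the matching upper bound: any two non-isolated configurations with the same tile-order must lie in one component. Once the order is fixed, a configuration is determined by its set $S$ of $h$ hole-positions (the tiles then fill the complement in the prescribed order), and the induced moves become exactly the following: whenever $|S\cap\{i,i+1,i+2\}|=2$, one may replace $S\cap\{i,i+1,i+2\}$ by any other $2$-element subset of that triple. The plan is to show that every such non-isolated $S$ can be driven to the canonical set $\{1,\dots,h\}$. The mechanism is that a non-isolated configuration always contains a pair of holes lying within distance $2$, and such a pair can be transported as a unit along the whole strip, tunnelling a tile through the pair whenever a tile must be passed; using this mobile pair one then gathers the remaining holes to the left end by induction on $h$. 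I expect the main obstacle to be bookkeeping rather than a single hard idea: one must handle the two endpoints of the strip and, above all, guarantee that every intermediate configuration stays non-isolated, so that the connecting path never leaves the set of vertices being counted. Combining the two bounds yields exactly $t!=(2m_2-h)!=(m_1m_2-h)!$ components containing non-isolated configurations.
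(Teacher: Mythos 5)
Your proposal is correct and takes essentially the same route as the paper: your zigzag enumeration is exactly the paper's ``weakly above'' reading order of the dual grid, your invariant (the linear order of the tiles, ignoring holes) is the paper's claim that the set of labels weakly above a given tile is preserved by every slide, and your canonical hole-set $\{1,\dots,h\}$ plays the role of the paper's ``home configurations.'' The only difference is cosmetic but pleasant: you obtain the invariance structurally, from the fact that the board graph is the square of a path so every triangle is a consecutive triple $\{i,i+1,i+2\}$ and a slide never carries a tile past another tile, whereas the paper verifies the same fact by a case analysis on same-row and adjacent-row slides.
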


 For a rectangular shape board in the square case when one has two holes the puzzle graph gets maximally connected. This is not the case for hexagonal sliding puzzles. For instance, we find that the $3 \times 3$ parallelogram with two holes exhibits a parity property similar to what is seen for the 15 puzzle. If we remove three holes instead of two, the resulting puzzle graph is maximally connected, meaning that every sliding puzzle on a $3 \times 3$ board with three holes is solvable.

As mentioned before, the only family of hexagonal boards that has been introduced and studied before are large parallelogram boards $P_{\varhexagon}(h;m)$ by Alpert in 2020 \cite{alpert2020discrete}. 
\begin{prop}[Alpert \cite{alpert2020discrete}]
For $m \geq 5$ and $h \geq 6$, $puz[P_{\varhexagon}(h;m)]$ is maximally connected.
\end{prop}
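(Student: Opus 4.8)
The plan is to follow the standard group-theoretic framework for sliding puzzles and reduce maximal connectivity to a statement about realizable permutations of the tiles. Writing $N = m^2 - h$ for the number of tiles, I regard a \emph{hole-returning} sequence of legal slides --- one that begins and ends with all $h$ holes in a fixed canonical set of positions --- as inducing a permutation of the $N$ labeled tiles. The subgroup $G \le S_N$ of all such realizable permutations, together with the ability to transport the holes anywhere on the board, determines the component structure: $puz[P_{\varhexagon}(h;m)]$ is maximally connected precisely when $G = S_N$, whereas $G = A_N$ would instead yield the strong parity property. So the goal becomes to show $G = S_N$.

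First I would establish a hole-transport lemma: in any non-isolated configuration there is an adjacent pair of holes enabling a slide (Figure \ref{fig:my_label}), and such a cluster of two or more holes can migrate to any location on the board and be reshaped into a prescribed canonical blob. With $h \ge 6$ holes on an $m \times m$ board with $m \ge 5$ there is ample room to gather all holes into one connected region in a fixed corner, so any two non-isolated configurations may be assumed to have their holes in the same canonical positions, differing only by a permutation of the tiles. This reduces maximal connectivity to the single assertion $G = S_N$ and, via conjugation by hole transports, lets any local gadget built near the canonical cluster be moved so that it acts on an arbitrary choice of tiles.

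Next I would produce the two generating families. For the even part, I would exhibit an explicit hole-returning slide sequence inside the canonical cluster that cyclically permutes three designated tiles and restores every hole; transporting and conjugating this $3$-cycle shows that $G$ contains all $3$-cycles, hence $A_N \le G$. For the odd part --- the crux of the argument --- I would construct a second hole-returning sequence realizing a single transposition, which then forces $G = S_N$. The mechanism must genuinely exploit the surplus of holes: routing one hole around another along a closed loop lets the two holes exchange roles and contributes an odd factor to the induced tile permutation, something impossible with a single hole, where Wilson-type parity is an invariant. The generous bound $h \ge 6$ is what guarantees enough empty neighbors at each step for the hexagonal slide rule (two adjacent holes both touching the moving tile) to remain applicable throughout the loop, and $m \ge 5$ guarantees the loop fits on the board.

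I expect the odd-permutation construction to be the main obstacle. Designing a gadget that effects exactly an odd permutation while returning all holes to their starting cells is delicate precisely because the hexagonal adjacency rule forbids sliding a tile into an isolated hole, so each intermediate move must be bookkept to keep a usable adjacent pair of holes available; this is also the step that separates these boards from the single-hole case and explains why additional holes are required. An alternative, more combinatorial route would replace the two generating steps by an explicit solving algorithm that drives any non-isolated configuration to a single canonical configuration region by region, carrying the hole cluster along as maneuvering space and disposing of the final small sub-board --- where the parity subtlety concentrates --- by a short case analysis. Since the threshold $h \ge 6$ is not sharp (it is later improved), either route has comfortable slack to work with.
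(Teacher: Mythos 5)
Your proposal is a strategy outline, not a proof: both generating gadgets are deferred rather than constructed. You write that you ``would exhibit'' a hole-returning sequence realizing a $3$-cycle and ``would construct'' a second sequence realizing a transposition, and you yourself flag the odd gadget as ``the main obstacle'' --- but that gadget is precisely the mathematical content of the statement, so leaving it as a plan leaves the proof with a genuine gap. Worse, the one concrete mechanism you do sketch for producing an odd permutation --- routing one hole around another along a closed loop so that ``the two holes exchange roles'' --- cannot work as described: it is a two-hole mechanism, and Theorem \ref{paritytheorem} (the augmented-parity argument) shows that on parallelogram boards a configuration with exactly two holes can never reach an odd permutation of the tiles with the holes returned to the same positions. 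So any correct odd gadget must use at least three holes in an essential, structural way, and finding one is exactly the delicate step your proposal postpones. The hole-transport lemma is also asserted without argument; it is believable for $h \geq 6$, but on hexagonal boards the slide rule (a tile needs two adjacent holes) and the tight corners discussed in Lemma \ref{tightcorner} mean mobility claims require actual verification.

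It is also worth noting that the paper itself does not prove this proposition --- it is quoted from Alpert \cite{alpert2020discrete} --- and the paper's own stronger result, Theorem \ref{thm:parallelogram} (three holes suffice, for all $m_1,m_2 \geq 3$), is proved by a route that deliberately avoids your difficulty: a simultaneous induction on $m_1,m_2$ whose base case $P_{\varhexagon}(3;3,3)$ is verified computationally, combined with the Conjugation Lemma \ref{conjugation} to gather two target tiles and three holes into a smaller sub-board where the inductive hypothesis swaps them. In effect the paper obtains your desired conclusion $G = S_N$ by showing every transposition is realizable, with the computer check absorbing the gadget construction that your group-theoretic plan would have to carry out by hand. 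If you want to complete your approach, the honest comparison suggests either designing and verifying an explicit three-hole transposition circuit (nontrivial, for the parity reason above) or adopting the induction-plus-base-case scheme.
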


Alpert's result states that if one has at least 6 removed tiles from a parallelogram hexagonal board that is at least of size $5\times5$, then the puzzle graph of the board gets as connected as possible. Our next theorem improves on Alpert's result by showing that we can reach maximal connectivity with only three holes. In fact, we shall see below that this is the exact number of tiles which need to be removed to reach maximal connectivity.

\begin{thm}\label{thm:parallelogram}
For $m_1 \geq 3$ and $m_2 \geq 3$, the puzzle graph $puz[P_{\varhexagon}(3; m_1,m_2)]$ is maximally connected. 
\end{thm}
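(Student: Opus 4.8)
The plan is to prove the statement by induction on the board size $m_1+m_2$, with the $3\times 3$ case as the base and an inductive step that \emph{freezes} a boundary column, reducing the board $P_{\varhexagon}(3;m_1+1,m_2)$ to the board $P_{\varhexagon}(3;m_1,m_2)$ supplied by the induction hypothesis. Since the $m_1\times m_2$ and $m_2\times m_1$ parallelograms are congruent, $puz[P_{\varhexagon}(3;m_1,m_2)]\cong puz[P_{\varhexagon}(3;m_2,m_1)]$, so it suffices to enlarge a single coordinate and always reduce the larger one; the base of the induction is then $P_{\varhexagon}(3;3,3)$. I will rephrase ``maximally connected'' as the existence of a single \emph{canonical} configuration to which every non-isolated configuration can be slid. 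Note that with three holes a configuration is isolated exactly when no two holes are adjacent, so isolated configurations do occur; but the theorem only requires the non-isolated configurations to form one component, and throughout we keep at least two mutually adjacent holes available as a ``mover.''

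For the base case $puz[P_{\varhexagon}(3;3,3)]$ I would verify maximal connectivity directly, by breadth-first search over its $9!/3!=60480$ configurations, as already announced above for the $3\times 3$ board. Conceptually, the reason three holes suffice here---whereas two holes give only the weak parity property---is the slide move of Figure~\ref{fig:my_label}: a tile moves precisely when it is adjacent to two \emph{mutually adjacent} holes, that is, to a triangle of the hexagonal adjacency graph. Unlike the square grid, this adjacency graph is \emph{not bipartite}, so the three holes can traverse a closed walk of odd length around one triangular face and return home. Tracking the three holes as distinguishable blanks, the sign of the induced permutation of the labeled tiles times the sign of the induced permutation of the blanks equals the parity of the number of slides; an odd triangular loop therefore lets an odd permutation of blanks coexist with tiles fixed, and it is the \emph{third} hole that supplies the extra freedom needed to convert such a blank exchange into a genuine odd permutation of the labeled tiles. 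This is exactly the parity obstruction that persists for two holes and is destroyed by the third, and it is what the base-case search confirms.

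For the inductive step, assume $puz[P_{\varhexagon}(3;m_1,m_2)]$ is maximally connected and consider the board $P_{\varhexagon}(3;m_1+1,m_2)$ obtained by appending one column $C$ of $m_2$ hexagons, with complementary sub-board $S\cong P_{\varhexagon}(3;m_1,m_2)$. Starting from any non-isolated configuration, I would first \emph{herd} the three holes out of $C$ into $S$, and then \emph{sort} the $m_2$ tiles of $C$ into a fixed canonical order, parking the holes inside $S$ so that two of them remain adjacent. Once $C$ holds its canonical tiles we never slide a tile out of $C$ again: all remaining slides involve only holes lying in $S$, so $C$ stays frozen and the residual dynamics is precisely that of $puz[P_{\varhexagon}(3;m_1,m_2)]$ on $S$. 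By the induction hypothesis this sub-puzzle is maximally connected, so we finish by sliding $S$ into its own canonical arrangement. Hence every non-isolated configuration of the larger board reaches the single configuration ``$C$ canonical, $S$ canonical,'' and the larger puzzle graph is maximally connected.

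The main obstacle is the herding-and-sorting lemma inside the inductive step, which is where the two-adjacent-holes constraint bites. Because no single hole can move on its own, the holes must travel as a pair while a third is kept in reserve, so the usual sliding-puzzle routines for walking a chosen tile to a target cell must be re-engineered for paired hole motion on the triangular hexagonal adjacency. The delicate point is the end of the sort---placing the final one or two tiles of $C$ without disturbing those already fixed---which in the square $15$-puzzle is handled by a corner rotation; here I would instead use a short cyclic maneuver around a triangle of cells straddling $C$ and $S$, available precisely because $m_1,m_2\ge 3$ leave room for the three holes to circulate alongside $C$. Verifying that this maneuver succeeds from every starting configuration, and that the holes can always first be herded clear of $C$, is the technical heart of the argument; everything else is the bookkeeping of the induction.
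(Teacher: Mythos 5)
Your induction skeleton is reasonable, and your base case matches the paper's (a computational check of $P_{\varhexagon}(3;3,3)$). But your inductive step rests entirely on the ``herding-and-sorting lemma'' --- that from any non-isolated configuration of $P_{\varhexagon}(3;m_1+1,m_2)$ you can bring the $m_2$ designated tiles into the appended column $C$ in canonical order while parking all three holes in $S$ with two of them adjacent --- and you do not prove it; you yourself flag it as ``the technical heart of the argument.'' That lemma is not bookkeeping: it is a full tile-placement routine for hexagonal boards under the two-adjacent-holes movement constraint, including the notoriously delicate placement of the last one or two tiles of the column without disturbing those already fixed (your ``short cyclic maneuver around a triangle straddling $C$ and $S$'' is named but never constructed or verified). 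As written, the proof has a genuine gap, and filling it is essentially as hard as the theorem itself.

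The paper's proof is organized precisely to avoid this. Instead of solving the new column explicitly, it uses the Conjugation Lemma (Lemma \ref{conjugation}): to realize a transposition $(a,b)$ starting from a configuration $C$, it suffices to exhibit \emph{any} configuration $C_1$ reachable from $C$ on which $(a,b)$ can be applied, with no obligation to restore the other tiles along the way. The inductive step therefore only needs to push $a$, $b$, and three holes into some $m_1\times m_2$ sub-board (three easy cases according to which of $a,b$ lie in the extra column), after which the inductive hypothesis performs the swap inside that sub-board; since transpositions generate all permutations and the holes can be repositioned freely, maximal connectivity follows. To repair your argument, either construct the explicit sorting maneuvers you allude to, or reorganize the step around the Conjugation Lemma as the paper does, which removes the need for them entirely.
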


Using this theorem, and an inductive approach involving \emph{patching} together smaller boards, we explore parity properties, maximal connectivity and the connection number of puzzle graphs of families of boards of other shapes. We now define some notation to use for these specific families of boards with hexagonal tiles.

Recall that we write $P_{\varhexagon}(h;m_1,m_2)$ to denote a board in the shape of an $m_1 \times m_2$ parallelogram with $h$ holes. Here $m_1$ denotes the number of columns, and $m_2$ the number of rows. When $m_1=m_2$, we simplify this notation to $P_{\varhexagon}(h;m)$. When we wish to refer only to the shape of the board, not the number of holes, we drop the $h$ and write $P_{\varhexagon}(m_1,m_2)$.

Similarly, we write $T_{\varhexagon}(h;m)$ to denote a board in the shape of an equilateral triangle, with $m$ tiles on a side. We write $F_{\varhexagon}(h;m)$ to denote a board in the shape of a hexagon, again with $m$ tiles on a side. To avoid confusion with the hexagonal shape of an individual tile, we refer to these hexagon-shaped boards as ''flowers'' in the text. We may think of a flower as being constructed by concentric rings of tiles around a central tile, hence $F_{\varhexagon}(h;m)$ may be referred to as a flower with $m$ layers.

\begin{cor}
    \label{cor:patchedboards}
    For $h \geq 3$, the puzzle graph of $puz[T_{\varhexagon}(h:m)]$ is maximally connected for $m \geq 5$, and the puzzle graph of $puz[F_{\varhexagon}(h:m)]$ is maximally connected for $m \geq 3$.
\end{cor}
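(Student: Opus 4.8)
The plan is to deduce maximal connectivity of the triangular and flower boards from Theorem~\ref{thm:parallelogram} by \emph{patching}: I would cover each board by overlapping parallelogram sub-boards, each of size at least $3\times 3$, transport the three holes from one piece to the next through their overlaps, and use the maximal connectivity of each piece (Theorem~\ref{thm:parallelogram}) to rearrange the tiles inside it. I work first with $h=3$; the case $h>3$ follows by parking the extra $h-3$ holes in cells of the complement that are never disturbed and running the same argument with three designated \emph{working holes}, the extra holes only making more moves available.

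The first step is the combinatorial covering. For the flower I would use the classical fact that a regular hexagon of side $m$ is a union of three $m\times m$ rhombi meeting at the centre; concretely this realizes $F_{\varhexagon}(m)$ as a union of three copies of $P_{\varhexagon}(m,m)$ that pairwise share a whole diagonal of tiles (the count $3m^2$ against $3m^2-3m+1$ shows the three pieces overlap in $3m-1$ tiles), which already works for $m\geq 3$. For the triangle I would cover the interior by parallelograms of size at least $3\times 3$, placed along and across the rows so that consecutive pieces share at least a $3\times 1$ strip of tiles; here $m\geq 5$ is the threshold below which a side-$m$ triangle is too small to admit such a covering (a side-$3$ triangle has only $6<9$ tiles, and a side-$4$ triangle, with $10$ tiles, still cannot be so covered). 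In each case the features to record are: (i) every tile lies in some piece $P_i$; (ii) each $P_i$ is a parallelogram with both side lengths at least $3$, hence maximally connected by Theorem~\ref{thm:parallelogram}; and (iii) the \emph{overlap graph}, whose vertices are the pieces and whose edges join pieces sharing a tile, is connected, and each overlap can hold all three holes.

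The core is a patching lemma. Fix a piece $P_i$, move all three holes into it, and freeze the tiles of $B\setminus P_i$; since nothing outside $P_i$ can then move, slides confined to $P_i$ realize exactly the moves of $P_{\varhexagon}(3;m_1,m_2)$, which is maximally connected. Maximal connectivity of a single piece is stronger than it first appears: because the whole non-isolated puzzle graph of $P_i$ is one component, two configurations differing by a single transposition of tiles (holes in the same cells) are connected, so by returning the holes to a fixed home cluster inside $P_i$ we realize \emph{every} permutation of the tiles of $P_i$, i.e.\ the full symmetric group $S_{P_i}$. The group-theoretic input is then short: if $P_i$ and $P_j$ share a tile $c$, then for $a$ a tile of $P_i$ and $b$ a tile of $P_j$ we have $(a\,c)\in S_{P_i}$ and $(c\,b)\in S_{P_j}$, and $(a\,c)(c\,b)(a\,c)=(a\,b)$; since the overlap graph is connected this produces every transposition of tiles of $B$, so the permutations achievable by slides returning the holes to a home cluster generate the full symmetric group on all tiles of $B$. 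Combined with the fact that the three holes can be routed to any non-isolated placement (walk them together through the overlaps, each of which accommodates all three), every non-isolated configuration is reachable from the home configuration, giving maximal connectivity.

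The main obstacle is the patching lemma, and within it the \emph{hole transport} across a seam: I must check that three holes in $P_i$ can be walked into the overlap $P_i\cap P_j$ and then into $P_j$ without ever passing through an isolated configuration, and that the permutation incurred in transit is harmless (it is absorbed into the symmetric group already being generated). The covering and the group theory are routine once this is in place; the delicate points are (a) that each shared region is genuinely wide enough for the three holes to cross and for a common tile to be available for the conjugation argument, which is exactly what forces the thresholds $m\geq 5$ for triangles and $m\geq 3$ for flowers, and (b) confirming that every non-isolated configuration can first be brought to a state with the three holes clustered inside a single piece, which anchors the argument and secures that the lone large component really does meet every non-isolated configuration.
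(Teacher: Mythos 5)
Your proposal follows essentially the same route as the paper: Theorem \ref{thm:parallelogram} supplies maximally connected parallelogram patches, and a patching argument (the paper's Patching Theorem, proved via the Conjugation Lemma) moves the holes and the relevant tiles into a single patch to realize arbitrary transpositions; your conjugation identity $(a\,c)(c\,b)(a\,c)=(a\,b)$ is the same mechanism in group-theoretic dress, and your triangle covering and the $m\geq 5$ threshold match the paper's. The one substantive divergence is your flower covering by three $m\times m$ rhombi meeting at the centre: for $m=3$ each pairwise overlap is a line of only $3$ tiles, which cannot hold the three holes together with the tile being ferried across the seam, so it falls short of the overlap size (at least $h+1$ cells) that the patching step requires; the paper instead overlaps two $m\times m$ parallelograms in a full triangular region of $m(m+1)/2\geq 6$ tiles to form a trapezoid and then glues two trapezoids, which keeps every overlap comfortably large even in the smallest case.
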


We now investigate boards with exactly two holes missing. Here, as in the case of rectangular boards with one hole, parity is a key consideration. 

\begin{thm}\label{paritytheorem}
 The board $P_{\varhexagon}(2; m_1,m_2)$ has the weak parity property for $m_1, m_2 \geq 3.$
\end{thm}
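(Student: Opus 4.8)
The plan is to adapt Johnson's parity argument for the $15$ puzzle, but to replace the bipartite (checkerboard) coloring of the square grid---which has no analogue here, since the adjacency graph of the hexagonal tiling is the triangular lattice and contains odd cycles---with a three-coloring invariant attached to the two holes. Throughout I will think of $P_{\varhexagon}(2;m_1,m_2)$ as a region of the triangular lattice (hexagon centers), where two cells are adjacent iff their hexagons share an edge, and I will keep the two holes labeled as $H_1, H_2$ and the $m_1m_2-2$ tiles labeled as well.

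First I would record the geometry of two-hole configurations. A tile may slide into a hole only when it is adjacent to two holes that are themselves adjacent, so a configuration is non-isolated precisely when its two holes occupy an adjacent pair of cells; and each move keeps the holes adjacent, since if the tile at $P$ slides into the hole at $Q_1$ then the vacated cell $P$ is still adjacent to the untouched hole at $Q_2$. Hence along any path in the puzzle graph between non-isolated configurations the holes remain an adjacent pair, exactly one hole moves per step (into the third vertex of the triangle it forms with the stationary hole), and each step is a transposition of one tile with one hole.

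Next I would fix a proper three-coloring of the triangular lattice by colors $\{0,1,2\}$ in which every elementary triangle is rainbow (for instance, assign the hexagon at lattice coordinates $(i,j)$ the color $(i-j)\bmod 3$). Since the two holes are always adjacent they occupy two distinct colors, leaving a well-defined \emph{missing color} $m$; and the sliding tile always sits at the missing color, because the three cells involved in a move form a rainbow triangle. I would then define the invariant $s$ to be the sign of the permutation $(c(H_1),c(H_2),m)$ of $\{0,1,2\}$. The key computation is that a single move swaps the missing color with the color of whichever hole moves: if $H_1$ moves, the triple $(c_1,c_2,m)$ becomes $(m,c_2,c_1)$, and if $H_2$ moves it becomes $(c_1,m,c_2)$. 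Each of these is a transposition of the triple, so every move flips $s$; after $n$ moves, $s$ is multiplied by $(-1)^n$.

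Finally I would combine this with the usual sign bookkeeping. The total permutation of all labeled objects produced by a path is a product of the $n$ move-transpositions, so its sign is $(-1)^n$; when the path returns the holes to their original positions as a set, this permutation splits as (tile permutation)$\,\times\,$(hole permutation), giving $\operatorname{sign}(\text{tile perm})=(-1)^n\cdot\operatorname{sign}(\text{hole perm})$. If the holes return to their exact original positions then $s$ is unchanged, forcing $n$ even, while the hole permutation is trivial; if the holes are interchanged then the first two entries of the triple are swapped, negating $s$ and forcing $n$ odd, while the hole permutation is a transposition. In either case the two sign contributions cancel and the tile permutation is even, which is exactly the weak parity property. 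The only real obstacle is discovering this invariant: the non-bipartiteness of the triangular adjacency graph kills the square-grid argument, and the three-coloring sign $s$ is precisely the substitute that converts ``the holes came back, possibly swapped'' into the required parity of the move count.
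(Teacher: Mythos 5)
Your proof is correct, and it takes a genuinely different route from the paper's, so let me compare. The paper also labels the two holes and tracks the sign of the permutation of \emph{all} labelled objects, but its positional invariant is different: it passes to the dual graph, realized as a square grid with one family of diagonals, and adds to the total-permutation parity $p_3$ the parities $p_1,p_2$ of the taxicab distances from the two labelled holes to a fixed corner. A horizontal or vertical slide changes $p_3$ and exactly one of $p_1,p_2$, so $p_1+p_2+p_3$ is invariant; diagonal slides, which would break the invariant, are handled by a separate case analysis replacing each one with a vertical-then-horizontal pair realizing the same underlying configuration. Your three-colouring sign $s$ of the ordered triple $\left(c(H_1),c(H_2),m\right)$ plays the role of $p_1+p_2$, and it buys you two things. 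First, it treats every slide uniformly as a transposition of two entries of the triple, so no reduction of diagonal moves is needed. Second, and more substantively, $s$ is antisymmetric in the two hole labels while $p_1+p_2$ is symmetric; this is precisely what lets you dispose of the case where the holes return to their original positions as a set but interchanged --- there you get $n$ odd and an odd hole permutation, the two signs cancel, and the tile permutation is still even. The paper's invariant cannot distinguish that case from the unswapped one and its proof leaves the corresponding step implicit, so your write-up is, if anything, the more complete argument. The one detail to pin down is that the formula ``colour $(i-j)\bmod 3$'' must be matched to the coordinate convention (equivalently, to which diagonal of the grid carries an edge); since a proper $3$-colouring of the triangular lattice with every elementary triangle rainbow certainly exists, this is cosmetic.
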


\begin{cor}\label{cor:weakparity}
    Any board with exactly two holes that is a sub-board of a parallelogram board has the weak parity property. Hence boards of any shape that we explore in this paper with exactly two holes have the weak parity property.
\end{cor}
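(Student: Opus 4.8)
The plan is to reduce the statement to Theorem~\ref{paritytheorem} by embedding an arbitrary sub-board into a genuine parallelogram and \emph{freezing} the surrounding tiles. Let $B$ be a sub-board of some parallelogram, equipped with exactly two holes, and suppose $c_1$ and $c_2$ are non-isolated configurations of $puz[B(2)]$ lying in the same connected component and having their holes in the same two positions. Since $B$ is a finite subset of the hexagonal tessellation, it is contained in a parallelogram board $P = P_{\varhexagon}(m_1,m_2)$, and after enlarging $P$ we may assume $m_1,m_2 \geq 3$, so that Theorem~\ref{paritytheorem} applies to $P$.

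First I would fill the positions of $P \setminus B$ with a fixed set of auxiliary labeled tiles, identical across all configurations, and thereby extend $c_1,c_2$ to configurations $\tilde c_1, \tilde c_2$ of $P(2)$. Both retain their two holes inside $B$, in the same positions, and both are non-isolated in $P$ since any legal slide on $B$ is also legal on $P$. Each edge of a $B$-path from $c_1$ to $c_2$ slides a tile into a hole using only positions of $B$, so the identical slide is available on $P$ and leaves every auxiliary tile fixed; concatenating these slides places $\tilde c_1$ and $\tilde c_2$ in one component of $puz[P(2)]$. By Theorem~\ref{paritytheorem}, $\tilde c_2$ is obtained from $\tilde c_1$ by an even permutation $\pi$ of the tiles of $P$. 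As the auxiliary tiles never moved, $\pi$ fixes every tile of $P \setminus B$ and acts on the tiles of $B$ exactly as the permutation carrying $c_1$ to $c_2$. A permutation fixing those extra points has the same sign as its restriction, so the induced $B$-permutation is even; hence $puz[B(2)]$ has the weak parity property.

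For the second statement I would note that each shape studied here is a sub-board of a parallelogram: a triangle $T_{\varhexagon}(m)$ is half of a parallelogram, and a flower $F_{\varhexagon}(m)$, like any finite region of the tessellation, sits inside the bounding parallelogram of its tiles, so the first statement then applies directly. The only point demanding care — the main obstacle — is verifying that the frozen-complement embedding faithfully transports both the connectivity and the non-isolation hypotheses from $B$ to $P$, and that the parity conclusion descends from the tiles of $P$ back to those of $B$. Extra slides may become available on $P$ (an auxiliary tile adjacent to a hole could in principle move), but this is harmless: I only ever invoke slides coming from the prescribed $B$-path, so the auxiliary tiles stay genuinely fixed and the sign computation goes through.
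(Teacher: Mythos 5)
Your proposal is correct and follows essentially the same route as the paper: embed the sub-board in a sufficiently large parallelogram, observe that every legal slide on $B$ is a legal slide on $P$ fixing the filler tiles, and invoke Theorem~\ref{paritytheorem}; your version just spells out the sign-of-restriction step that the paper leaves implicit. No further comparison is needed.
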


\begin{thm}\label{thm:connectionnumber}
    The connection number is $3$ for the following boards: $P_{\varhexagon}(m_1,m_2)$ with $m_1,m_2 \geq 3$, $T_{\varhexagon}(m)$ with $m \geq 5$, and $F_{\varhexagon}(m)$ for $m \geq 3.$
\end{thm}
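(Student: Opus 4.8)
The plan is to pin the connection number at exactly $3$ by proving two bounds: that three holes always suffice for maximal connectivity, and that two holes never do. The upper bound is immediate from results already established. For $P_{\varhexagon}(m_1,m_2)$ with $m_1,m_2\ge 3$, Theorem \ref{thm:parallelogram} states that $puz[P_{\varhexagon}(3;m_1,m_2)]$ is maximally connected, so the connection number is at most $3$. For $T_{\varhexagon}(m)$ with $m\ge 5$ and $F_{\varhexagon}(m)$ with $m\ge 3$, Corollary \ref{cor:patchedboards} gives maximal connectivity for every $h\ge 3$, hence in particular for $h=3$; so each of these families also has connection number at most $3$.

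For the lower bound I would show that with exactly two holes none of these boards is maximally connected. The key input is the weak parity property: by Theorem \ref{paritytheorem} together with Corollary \ref{cor:weakparity}, every two-hole board of the shapes under consideration has the weak parity property, meaning that any two non-isolated configurations with the holes in the same positions that lie in a common connected component differ by an \emph{even} permutation of the labels. It then suffices to exhibit, on each board, a pair of non-isolated configurations with holes in identical positions that differ by an \emph{odd} permutation, for such a pair cannot share a component. I would produce this by placing the two holes in a pair of edge-adjacent cells, so that by the sliding rule illustrated in Figure \ref{fig:my_label} some incident tile is mobile; the resulting configuration $C$ is non-isolated. Holding the holes fixed, I then transpose the labels of two tiles lying away from the holes to obtain $C'$, which is still non-isolated because the mobile tile near the holes is untouched. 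Since $C$ and $C'$ differ by a single transposition, an odd permutation, the weak parity property forces them into distinct components, so the two-hole puzzle graph has more than one component of non-isolated configurations and is not maximally connected.

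It remains to rule out the smaller values of $h$ to confirm that $3$ is genuinely the minimum. For hexagonal tiles a slide requires two edge-adjacent empty hexagons both incident to the moving tile, so whenever $h\le 1$ no tile can move at all: every configuration is isolated, there is no large connected component of non-isolated configurations, and the board is not maximally connected. Assembling the two-hole obstruction with the $h=3$ results of Theorem \ref{thm:parallelogram} and Corollary \ref{cor:patchedboards} yields the claim. The argument is largely a matter of combining prior results; the only point demanding care is the construction of the pair $C,C'$, which requires the board to accommodate two adjacent holes together with at least two further labeled tiles to transpose. This is guaranteed throughout the stated ranges $m_1,m_2\ge 3$, $m\ge 5$, and $m\ge 3$, so the main obstacle reduces to checking this modest size condition in each family.
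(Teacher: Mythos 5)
Your proposal is correct and follows essentially the same route as the paper: the upper bound comes from Theorem \ref{thm:parallelogram} and Corollary \ref{cor:patchedboards}, and the lower bound from the weak parity property of Theorem \ref{paritytheorem} and Corollary \ref{cor:weakparity}. The paper states the two-hole obstruction in one sentence, whereas you usefully spell out the witness pair (two adjacent holes plus a transposition of two other tiles, noting that non-isolation depends only on the hole positions), but the underlying argument is identical.
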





It might be natural to hope that e.g. parallelogram-shaped boards have the strong parity property as well. However, this is not the case, essentially because tiles get ``stuck'' in corners of the board. To get around this problem, we define a \emph{tight corner} of a board to be any tile with exactly two neighbors. We define \emph{trimmed board} to be the result of removing all tight corners from an initial board. In principle, removing tight corners from a board could create new corners, by removing neighbors of the tiles left behind. Fortunately, if we start with a large-enough triangle or parallelogram, the resulting trimmed board has no tight corners. See Figure \ref{trimmedboards}.

We call the result of trimming an $m_1 \times m_2$ parallelogram board a \emph{trimmed $m_1 \times m_2$ parallelogram}, which we denote $P_{\varhexagon}^{tr}(m_1,m_2)$. Similarly, we write $T_{\hexagon}^{tr}(m)$ for the result of trimming a triangular board with $m$ hexagons on a side. See Figure \ref{trimmedboards}.

\begin{figure}
    \centering
    \begin{tikzpicture}
    [scale=0.5]
    \hexcol{1}{-1.5}{3}
    \hexcol{2}{-2}{4}
    \hexcol{3}{-1.5}{3}
    
    \begin{scope}[xshift=3.25cm]
    \hexcol{1}{-1.5}{4}
    \foreach \i in {2,...,3}{
        \pgfmathsetmacro{\start}{-1-0.5*\i}
        \hexcol{\i}{\start}{5}
    }
    \hexcol{4}{-2}{4}
    \end{scope}
    \begin{scope}[xshift=9cm]
    \foreach \i in {2,...,4}{
        \pgfmathsetmacro{\rownum}{\i-1}
        \hexcol{\rownum}{-0.5 * \i}{\i}
    }
    \hexcol{4}{-1.5}{3}
    \end{scope}
    \begin{scope}[xshift=13.25cm]
    \foreach \i in {2,...,5}{
        \pgfmathsetmacro{\rownum}{\i-1}
        \hexcol{\rownum}{-0.5 * \i}{\i}
    }
    \hexcol{5}{-2}{4}
    \end{scope}
    \end{tikzpicture}
    \caption{Left to right: the trimmed parallelograms $P_{\varhexagon}^{tr}(3,4)$ and $P_{\varhexagon}^{tr}(4,5)$, and the trimmed triangles $P_{\varhexagon}^{tr}(5)$ and $P_{\varhexagon}^{tr}(6)$.}
    \label{trimmedboards}
\end{figure}
\begin{thm}
\label{thm:strongparity}
The puzzle graphs of the following boards with two holes removed have the strong parity property: 
\begin{enumerate}
    \item $ F_{\varhexagon}(m)$ for $m \geq 3$
    \item $T_{\varhexagon}^{tr}(m)$ for $m \geq 5$ 
    \item $P_{\varhexagon}^{tr}(m_1,m_2)$ where $m_1,m_2 \geq 3$ and $\max\{m_1,m_2\}\geq 4$.
    \end{enumerate}
\end{thm}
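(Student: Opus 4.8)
The plan is to verify the two defining conditions of the strong parity property separately: the weak parity property, and the existence of exactly two components among the non-isolated configurations. Each of the three families---flowers, trimmed triangles, and trimmed parallelograms---embeds as a sub-board of a parallelogram, so by Corollary~\ref{cor:weakparity} all three have the weak parity property. This already yields the lower bound of at least two components: for these boards $N-2\ge 3$, where $N$ is the number of positions, so both an even and an odd permutation of the tiles occur among configurations with a fixed adjacent hole-pair, and by weak parity two such configurations with the holes in the same positions cannot lie in the same component. The entire burden of the proof is therefore the upper bound---showing that there are \emph{at most} two components, equivalently that all non-isolated configurations of a single parity form one connected component.

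The first reduction is to pin down the non-isolated configurations. With exactly two holes, a slide is possible precisely when the two holes are adjacent and some tile completes a triangle with them (Figure~\ref{fig:my_label}); hence a configuration is non-isolated if and only if its two holes form an adjacent pair, which I will call a \emph{dihole}. A direct check shows a slide carries a dihole to another dihole: if the holes sit at $q_1,q_2$ and the tile at $p$ slides, then $p,q_1,q_2$ are mutually adjacent, so the new holes $p,q_2$ are again adjacent. Thus the dihole persists, and moving it amounts to pivoting its edge across a triangle while transporting the displaced tile. The second reduction is group-theoretic: since $3$-cycles generate the alternating group $A_{N-2}$, it suffices to prove that, for any three tiles, one can realize the corresponding $3$-cycle by a sequence of slides that returns the dihole to its initial edge. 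Together with the ability to route the dihole to any edge, this connects all configurations of a common parity and simultaneously shows every non-isolated configuration reduces to one of two canonical ones, so the two components exhaust the non-isolated vertices.

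The crux, and the main obstacle, is the combinatorial geometry of the two remaining claims: \emph{dihole mobility} and a \emph{$3$-cycle gadget}. Both hinge on the structural fact that none of these boards has a tight corner---automatic for flowers, whose six extreme tiles each have three neighbors, and arranged by construction for the trimmed triangles and parallelograms, where (as noted in the text) trimming a large-enough board leaves no tight corners; this is exactly where the size hypotheses $m\ge 3$, $m\ge 5$, and $\max\{m_1,m_2\}\ge 4$ enter. I would first show that, on a board with no tight corner, the dihole can be navigated from any edge of the triangular adjacency graph to any other: every edge lies in a triangle, and the absence of degree-two tiles guarantees enough local room to pivot the dihole around vertices and thread it past the boundary, so a routing argument by induction on distance, reducing boundary cases to an interior move, reaches every edge. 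I would then exhibit an explicit local gadget at an interior vertex of degree six: placing the dihole among the tiles around such a vertex and cycling it once around a minimal triangle returns the dihole to its start while cyclically permuting exactly three tiles. Conjugating this gadget by dihole motions---drive the dihole to the site, rotate, drive it back---realizes an arbitrary $3$-cycle.

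Assembling the pieces gives the theorem: weak parity forces at least two components, while dihole mobility together with the $3$-cycle gadget realizes every element of $A_{N-2}$ with the holes returned home and brings every non-isolated configuration to one of two canonical forms, forcing at most two components. I expect the genuinely delicate work to be the boundary analysis in the mobility argument and the verification of the gadget near the former corners and in the smallest admissible boards, for instance $F_{\varhexagon}(3)$ and $T_{\varhexagon}^{tr}(5)$, where the scarcity of interior vertices makes the constructions tightest; these base cases likely require an explicit finite check, which the size hypotheses are calibrated to make possible.
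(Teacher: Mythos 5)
Your overall strategy (weak parity for the lower bound on the number of components, then generation of $A_{N-2}$ by $3$-cycles realized with the dihole returned home for the upper bound) is a reasonable plan, but the step you call the \emph{$3$-cycle gadget} is wrong, and it is the load-bearing step. Cycling the dihole once around a minimal triangle of the adjacency graph is the identity permutation: the dihole edge $\{q_1,q_2\}$ together with the single tile at the third vertex $p$ occupy all three vertices of that triangle, so the three pivots merely carry that one tile around the triangle and back to $p$, moving nothing else. Cycling the dihole around an interior vertex $v$ of degree six (holes at $v$ and a neighbor $u_1$, then pivoting $u_2\to u_1$, $u_3\to u_2,\dots$) returns the dihole to $\{v,u_1\}$ but effects a $5$-cycle on the tiles at $u_2,\dots,u_6$, not a $3$-cycle. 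The flower $F_{\varhexagon}(2)$ makes the failure concrete: by the paper's computation it has $24$ components of size $60$ with two holes, i.e.\ exactly $5$ configurations per component for each of the $12$ dihole positions, so the group of permutations realizable with the dihole returned home there is cyclic of order $5$, generated by that $5$-cycle --- no local $3$-cycle gadget exists inside a single ring. To manufacture a $3$-cycle one must compose loops around two different vertices with overlapping supports (for instance, a product of two $5$-cycles agreeing on four letters can be a $3$-cycle), or argue via transitivity together with simplicity of $A_n$ in the style of Wilson; either way this is a genuinely nontrivial argument that your proposal does not supply, and the boundary and small-board analysis you defer is precisely where such compositions are hardest to fit.

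For contrast, the paper sidesteps the need for any explicit gadget: it verifies computationally that $P^{tr}_{\varhexagon}(2;3,4)$ has exactly two components containing non-isolated configurations (which, combined with weak parity, forces strong parity), and then propagates the property to larger trimmed parallelograms, trimmed triangles, and flowers by a patching induction (Theorem \ref{paritypatching}) in which the needed $3$-cycles are supplied by the inductive hypothesis on the patches rather than constructed by hand. If you wish to keep your bare-hands approach, you would need to replace the triangle gadget with a correct $3$-cycle (or $5$-cycle-plus-transitivity) construction and carry out the dihole-routing analysis near the boundary; as written, the proof does not go through.
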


Hence flowers, trimmed parallelograms, and trimmed triangles give an intriguing new family of sliding puzzles which exhibit similar behavior to the classic 15 puzzle. For parallelogram and triangular boards with two holes, the story is a bit more complicated. Tiles can get `stuck'' in corners, a behavior that is not seen for square boards. As a result, we obtain many connected components containing non-isolated configurations, as detailed in the following theorem.
\begin{thm}
    \label{thm:countcomponents}
    If the board $B$ is
    \begin{enumerate}
        \item A parallelogram $P_{\varhexagon}(m_1,m_2)$, where $m_1,m_2\geq 3$ and $\max\{m_1,m_2\} \geq 4.$
        \item A triangle
        $T_{\varhexagon}(m)$ where $m \geq 5$.
    \end{enumerate}
    Then the number of connected components of $puz[B(2)]$ containing non-isolated configurations is given by
    \begin{itemize}
        \item $\displaystyle 4  \binom{m_1m_2-2}{2}$ for a parallelogram,
        \item $\displaystyle 12 \binom{m(m+1)/2-2}{3}$ for a triangle.
    \end{itemize}
\end{thm}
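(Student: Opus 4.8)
The plan is to translate everything into the motion of the pair of holes and then feed the already-proved strong parity property of the trimmed boards into a counting argument. Write $N$ for the number of cells of $B$, so $N=m_1m_2$ for the parallelogram and $N=m(m+1)/2$ for the triangle. The first observation is that, with exactly two holes, a configuration is non-isolated precisely when the two holes are adjacent: every slide requires two mutually adjacent empty hexagons sharing an edge with the moving tile, so no move is possible unless the two holes form an adjacent pair, which I will call the \emph{hole-domino}. A \emph{tight corner} $c$ has exactly two neighbors $n_1,n_2$, and for these boards $n_1,n_2$ are themselves adjacent, so $G=\{c,n_1,n_2\}$ is a little triangle attached to the rest of the board along $n_1$ and $n_2$; I write $z$ for the common neighbor of $n_1,n_2$ on the trimmed-board side. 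Geometrically the parallelogram has exactly two tight corners and the triangle exactly three, and the size hypotheses guarantee that the gadgets $G_i$ are pairwise disjoint and meet the trimmed board only at their $n_1^i,n_2^i$.

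The heart of the proof is a \emph{confinement lemma}: the tile $x$ initially at a tight corner $c$ can never leave its gadget $G$, and no other tile ever occupies $c$. First I would record the local move set: the hole-domino can act on $G$ only when both holes lie in $G$, in which case the three cells carry $x$ together with the two holes and moves merely shuffle $x$ among $c,n_1,n_2$. Next I would show the domino can enter or leave $G$ only through the state with holes at $\{n_1,n_2\}$ and $x$ at $c$: whenever the domino instead occupies $\{c,n_i\}$, the unique common neighbor of $c$ and $n_i$ is the remaining gadget cell, so the only legal move pushes $x$ back toward $c$ and the domino cannot escape into the core. Hence every time the domino leaves $G$ it parks $x$ at $c$, while $z$ is free to shuttle in and out; this confines $x$ to $G$ and forces $x$ to sit at $c$ in every configuration whose domino lies in the trimmed board. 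Carrying this out rigorously—ruling out any indirect route by which $x$ might be drawn into the core, and treating the case where a hole sits at the corner—is the main obstacle, since it is exactly here that the hexagonal ``two adjacent holes'' rule, rather than any parity count, produces the new behavior absent from square puzzles.

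Granting confinement, each tight corner $c_i$ acquires a well-defined associated tile $x_i$, and the ordered tuple $(x_1,\dots,x_k)$ of distinct tiles in the $k$ corners is constant on every connected component; there are $\frac{(N-2)!}{(N-2-k)!}=k!\binom{N-2}{k}$ such ordered injections. I would then fix one assignment and restrict to configurations whose domino lies in the trimmed board (so each $x_i$ is at $c_i$). These are in move-preserving bijection with configurations of the trimmed board carrying the remaining $N-2-k$ tiles, because any excursion of the domino into a gadget begins and ends at the state (holes at $\{n_1^i,n_2^i\}$, $x_i$ at $c_i$) and leaves all other tiles fixed, so every excursion is a loop that can be excised from a path without changing its endpoints in the trimmed board. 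By Theorem~\ref{thm:strongparity} the boards $T_{\varhexagon}^{tr}(m)$ and $P_{\varhexagon}^{tr}(m_1,m_2)$ with two holes have the strong parity property, hence exactly two components of non-isolated configurations; moreover every non-isolated configuration of $B$ connects to such a core configuration, since the domino can always be maneuvered to $\{n_1^i,n_2^i\}$ and then push $z$ back into the core. Therefore the components of $puz[B(2)]$ are in bijection with (ordered corner assignments)$\times$(the two parity classes of the trimmed board), giving
\[
2\cdot k!\binom{N-2}{k}.
\]

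Specializing completes the count. The parallelogram has $k=2$ and $N=m_1m_2$, yielding $2\cdot 2\binom{m_1m_2-2}{2}=4\binom{m_1m_2-2}{2}$, while the triangle has $k=3$ and $N=m(m+1)/2$, yielding $2\cdot 6\binom{m(m+1)/2-2}{3}=12\binom{m(m+1)/2-2}{3}$, as claimed. Two bookkeeping points I would verify to be sure the count is exact: that configurations whose two holes are non-adjacent are isolated and so are correctly excluded, and that no component is trapped inside a gadget, so that the bijection above neither over- nor under-counts.
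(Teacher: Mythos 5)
Your proposal is correct and follows essentially the same route as the paper: the paper factors the argument through Lemma~\ref{tightcorner} (the corner tile is confined, and must be parked back in its corner before any other slide can occur) and Lemma~\ref{corner_count} (which gives the count $c\cdot k!\binom{t}{k}$ over the $k!\binom{N-2}{k}$ corner assignments), and then sets $c=2$ via the strong parity property of the trimmed boards from Theorem~\ref{thm:strongparity}. Your confinement lemma, the excision of gadget excursions to reduce to the trimmed board, and the final product $2\cdot k!\binom{N-2}{k}$ reproduce exactly this chain of lemmas.
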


We calculate these God's numbers using the  Breath First Search algorithm that we have run on a specific component of a non-isolated configuration. By symmetry, all components of non-isolated configurations will have the same God's number.

The rest of the paper is structured as follows. In Section \ref{section:parallelogram}, we give a proof of Theorem \ref{thm:skiny} and Theorem \ref{thm:parallelogram}. In Section \ref{section:patching}, we prove a general result that allows us to build boards by gluing smaller boards in such a way that we preserve the connectivity of the puzzle graph. This allows us to prove Corollary \ref{cor:patchedboards}, which states that the puzzle graph is maximally connected for large-enough triangular and flower boards with three holes. 

Section \ref{section:parity} investigates boards with exactly two holes. Here, parity properties come into play. The section begins with a discussion of \emph{tight corners} of parallelogram and triangular boards, which can be an obstruction to connectivity for the puzzle graph. We then prove Theorem \ref{paritytheorem}, which shows that a large class of boards have the \emph{weak} parity property. This establishes Theorem \ref{thm:connectionnumber}, which gives the connection number for large-enough parallelogram, triangle, and flower-shaped boards. We adapt our patching method to show that many of the boards in question also have the \emph{strong} parity property,  puzzles with similar behavior to square puzzles. See Theorem 1.8. We end this with Theorem \ref{thm:countcomponents}, which gives the number of connected components containing non-isolated configurations in the puzzle graphs of triangular and puzzle-shaped boards with only two holes. Note that for these boards, the existence of tight corners leads to a large number of components. 

Throughout the paper, we make use of inductive arguments, with small boards of a given shape and given number of holes servings as the base case for the induction. We find the God's number and the number of connected components of the puzzle graphs of these smaller boards computationally, using Python code available in our GitHub repository \cite{hexagonSoftware}. Summaries of our computational results, with additional details, are found in Section \ref{section:computations}.



\section{First analysis on parallelogram-shaped hex boards}
\label{section:parallelogram}

We start by proving Theorem \ref{thm:skiny} that says that for $m_1 =2$ and $m_2 \geq 2$, $puz[P_{\varhexagon}(h; m_1,m_2)]$ has $(m_1m_2-h)!$ connected components containing non-isolated configurations. 

\begin{proof}
We orient our $2 \times m_2$ board so that the hexagonal tiles form columns of length $m_2$ in the vertical direction, with the center of the topmost tile in the left column slightly higher than the center of the topmost tile in the right column. See Figure \ref{2bym}.

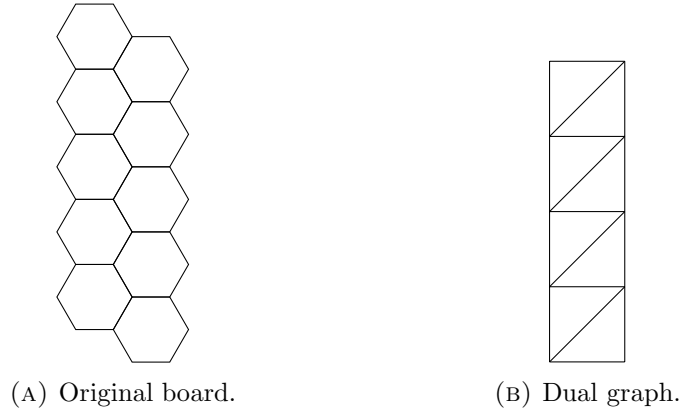
\begin{figure}
\begin{subfigure}[b]{0.3 \textwidth}
\centering
\begin{tikzpicture} 
[hexa/.style= {shape=regular polygon,regular polygon sides=6, minimum size=1cm, draw,inner sep=0,anchor=south}]
\foreach \j in {0,...,4}{
    \foreach \i in {0,1}{
        \node[hexa] (h\i;\j) at ({(0.75 * \i},{(\j-\i/2)*sin(60)}) {};}  } 
\end{tikzpicture}
\caption{Original board.}
\end{subfigure}
\hspace{0.5 in}
\begin{subfigure}[b]{0.3 \textwidth}
\centering
\begin{tikzpicture} 
\draw(0,0) grid (1, 4);
\foreach \j in {0,...,3}{
    \draw (0, \j) -- (1, {\j+1});}
\end{tikzpicture}
\caption{Dual graph.}
\end{subfigure}
\caption{A $2 \times m_2$ hex board with $m_2 = 5$, and its dual graph.}
\label{2bym}
\end{figure}

We may view the board as a graph, with vertices and edges corresponding to the corners and sides of hexagonal tiles or holes. It is convenient to consider the planar dual of this graph. To construct the dual, we draw a vertex in the center of each hexagon of the board. Two vertices in the \textit{dual graph} are adjacent precisely when the corresponding hexagons share an edge. We may then model the movement of tiles and holes on the original board by assigning tile labels to the corresponding vertices of the dual graph, and letting holes correspond to unlabeled vertices. We may slide a label to an unlabeled vertex precisely when the labeled vertex forms a triangle with two vertices that are unlabeled.

With these conventions, the dual graph of a $2 \times m_2$ board can be easily deformed to a $2 \times m_2$ grid with rows of length $2$ and columns of length $m_2$, and with diagonal edges connecting the bottom-left and top-right of each square. For convenience, we number the rows from top to bottom, starting with row $1$.

A configuration of a $2 \times m_2$ board is non-isolated precisely when there are at least two holes in adjacent positions. Let $n$ denote the number of tiles. If $n$ is even, so that $n = (2m_2-h)= 2k$ for some integer $k$, we can always perform slides on a non-isolated configuration such that the top $k$ rows are entirely filled with tiles, and the bottom $m_2 - k$ rows are entirely filled with holes. Similarly if $n = (2m_2-h)= 2k+1$ for some integer $k$, we can move the tiles so that the top $k$ rows are filled with tiles, and there is a tile in the left space of row $k+1$. In either case, we will say a board with tiles and holes positioned in this way is a \emph{home configuration}. 

Notice that home configurations are by construction non-isolated as long as $n \leq 2m_2 - 2$, that is equivalent to $h \geq 2$. Moreover, every component of $puz[P_{\varhexagon}(h; 2,m_2)]$ consisting of non-isolated configurations must contain at least one home configuration. The number of home configurations is $n!$, the number of possible permutations of the tiles (keeping the holes fixed). Hence to prove the proposition, it suffices to show that no two home configurations are in the same connected component of the puzzle graph.

Consider two labels in the dual graph corresponding to a $2 \times m_2$ board, say $a$ and $b$. We say $a$ is $\emph{weakly above}$ $b$ if either the row containing $a$ is above the row containing $b$, or $a$ and $b$ are in the same row with $a$ on the left. Notice that in a graph corresponding to a home configuration, the label at left in the $t^{th}$ row from the top is the unique label with exactly $2(t-1)$ labels weakly above it. The label at right in the $t^{th}$ row is the unique label with $2(t-1)+1$ labels weakly above it. 

Consider two home configurations $C_1, C_2$ which are in the same connected component of the puzzle graph, and let $j$ be any tile label which appears in $C_1$ and $C_2$. We claim that the set of labels which are weakly above $j$ is the same in the graphs of $C_1$ and $C_2$. In particular, the number of labels which appear weakly above $j$ is the same in the graphs of $C_1$ and $C_2$. But this, in turn, shows that $j$ must appear in the same position in both configurations, by the argument of the previous paragraph. Hence all tiles in $C_1$ and $C_2$ appear in the same positions, and $C_1$ and $C_2$ are the same.

We now prove the claim. Suppose $a$ is weakly above $b$ in the dual graph of a home configuration. It suffices to show that after performing any slide on the graph, $a$ will remain weakly above  $b$. This is trivial unless the slide moves either $a$ or $b$.

Case 1: $a$ and $b$ are in the same row. Then $a$ must be to the left of $b$. Now, $b$ can only slide if there is a triangle in the graph with $b$ as one vertex, and both other vertices unlabeled. Since the vertex to the left of $b$ is labeled by $a$, the only way $b$ can slide is if both vertices in the row directly below $a$ and $b$ are unlabeled. In this case, $b$ may slide down into either spot in that row.
But this slide moves $b$ into a row below the one containing $a$, so we still have $a$ weakly above $b$. See Figure \ref{samerow}.

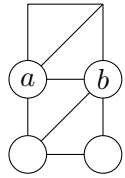
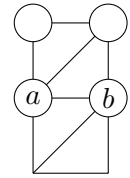
\begin{figure}
\begin{subfigure}[b]{0.4 \textwidth}
\centering
\begin{tikzpicture} 
[dot/.style= {shape=circle, draw, fill = white, minimum size = 0.5cm, inner sep = 0 pt,anchor=center}]
\draw(0,0) grid (1, 2);
\draw(0,0) -- (1,1);
\draw(0,1) -- (1,2);
\node[dot] at (0,1) {$a$};
\node[dot] at (1,1) {$b$};
\node[dot] at (0,0) {};
\node[dot] at (1,0) {};
\end{tikzpicture}
\caption{Tile $b$ cannot slide up, but can slide down if both vertices in the row below are unlabeled.}
\end{subfigure}
\hfill
\begin{subfigure}[b]{0.4 \textwidth}
\centering
\begin{tikzpicture} 
[dot/.style= {shape=circle, draw, fill = white, minimum size = 0.5cm, inner sep = 0 pt,anchor=center}]
\draw(0,0) grid (1, 2);
\draw(0,0) -- (1,1);
\draw(0,1) -- (1,2);
\node[dot] at (0,1) {$a$};
\node[dot] at (1,1) {$b$};
\node[dot] at (0,2) {};
\node[dot] at (1,2) {};
\end{tikzpicture}
\caption{Tile $a$ cannot slide down, but can slide up if both vertices in the row above are unlabeled.}
\end{subfigure}
\caption{Proof of Case 1: When $a$ and $b$ are in the same row.}
\label{samerow}
\end{figure}

Similarly, if we start with $a$ and $b$ in the same row, and $a$ to the left of $b$, then the only case in which $a$ can slide is if both spots in the row above $a$ and $b$ are unlabeled, and $a$ slides upward into one of those  spots. Again, after such a slide, we still have $a$ above $b$.

Case 2: $a$ is in a higher row than $b$. After a slide, each label either remains in the same row or moves to a neighboring row. Hence if suffices to check the case where $a$ is in the row immediately above $b$, and either $a$ slides down into the row containing $b$; or $b$ slides up into the row containing $a$. However, there is only one way in which a tile can slide upward into a row where one tile is labeled already. This can only occur if the labeled tile is on the left, with an unlabeled vertex on the right. Hence if $b$ slides upward into the row containing $a$, then $a$ will be to the left of $b$ after the slide, as desired. Similarly, for $a$ to slide downward into the row containing $b$, the vertex on the left of that row must be unlabeled while the right is labeled by $b$, so again the slide places $a$ to the left of $b$ as desired. See Figure \ref{diffrow}.

\begin{figure}
\begin{subfigure}[b]{0.4 \textwidth}
\centering
\begin{tikzpicture} 
[dot/.style= {shape=circle, draw, fill = white, minimum size = 0.5cm, inner sep = 0 pt,anchor=center}]
\draw(0,0) grid (1, 2);
\draw(0,0) -- (1,1);
\draw(0,1) -- (1,2);
\node[dot] at (0,1) {$a$};
\node[dot] at (1,1) {};
\node[dot] at (0,0) {$b$};
\node[dot] at (1,0) {};
\begin{scope}[xshift = 3 cm]
\draw(0,0) grid (1, 2);
\draw(0,0) -- (1,1);
\draw(0,1) -- (1,2);
\node[dot] at (0,1) {$a$};
\node[dot] at (1,1) {};
\node[dot] at (0,0) {};
\node[dot] at (1,0) {$b$};
\end{scope}
\end{tikzpicture}
\caption{If tile $b$ slides up, it will move into the space to the right of $a$.}
\end{subfigure}
\hfill
\begin{subfigure}[b]{0.4 \textwidth}
\centering
\begin{tikzpicture} 
[dot/.style= {shape=circle, draw, fill = white, minimum size = 0.5cm, inner sep = 0 pt,anchor=center}]
\draw(0,0) grid (1, 2);
\draw(0,0) -- (1,1);
\draw(0,1) -- (1,2);
\node[dot] at (0,2) {$a$};
\node[dot] at (1,2) {};
\node[dot] at (0,1) {};
\node[dot] at (1,1) {$b$};
\begin{scope}[xshift = 3 cm]
\draw(0,0) grid (1, 2);
\draw(0,0) -- (1,1);
\draw(0,1) -- (1,2);
\node[dot] at (0,2) {};
\node[dot] at (1,2) {$a$};
\node[dot] at (0,1) {};
\node[dot] at (1,1) {$b$};
\end{scope}
\end{tikzpicture}
\caption{If tiles $a$ slides down, it will move into the space to the left of $b$.}
\end{subfigure}
\caption{Proof of Case 2: When $a$ and $b$ are in different rows.}
\label{diffrow}
\end{figure}
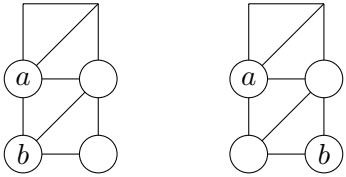
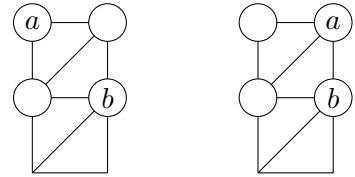

This completes the proof of our claim, and hence of Theorem \ref{thm:parallelogram}.
\end{proof}

We have proved that the board $P_{\varhexagon}(h; 2,m_2)$ with $m_2 \geq 2$ generates the same kind of sliding puzzles as the board $R_{\square}(h; 1,m_2)$ for $m_2 \geq 2$. Before studying the puzzle graph of more general parallelogram boards we first prove a useful lemma that helps us determine when applying a permutation on tiles of a given configuration produces another configuration in the same connected component of the puzzle graph.

\begin{lem}[Conjugation Lemma]
    Let $C$ be a configuration of a board, and let $\sigma$ be a permutation of the tile labels of $C$. To show that $\sigma \cdot C$ is in the same connected component of the puzzle graph as $C$, it suffices to show that there is a configuration $C_1$ in the same connected component as $C$, such that $\sigma \cdot C_1$ is also in the same connected component as $C$.
    \label{conjugation}
\end{lem}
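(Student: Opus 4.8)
The plan is to exploit the fact that the action of a fixed permutation $\sigma$ on tile labels is an \emph{automorphism} of the puzzle graph, and then to finish by a one-line transitivity argument. First I would make precise the relabeling map $\Phi_\sigma \colon C \mapsto \sigma \cdot C$, which sends a configuration to the one with every label $\ell$ replaced by $\sigma(\ell)$ while leaving the positions of all holes unchanged. The key observation is that whether a given slide is permitted depends only on the positions of tiles and holes, and never on the labels the tiles carry: a hexagonal tile may slide exactly when it is adjacent to two adjacent holes, and this is a purely positional condition (likewise in the square case). Consequently $\Phi_\sigma$ carries each edge of the puzzle graph to an edge, and since it is a bijection with inverse $\Phi_{\sigma^{-1}}$, it is a graph automorphism. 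In particular $\Phi_\sigma$ preserves connected components, so $C' \sim C''$ implies $\sigma \cdot C' \sim \sigma \cdot C''$ for any two configurations $C', C''$, where I write $\sim$ for ``in the same connected component of the puzzle graph.''

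With this in hand the deduction is immediate. By hypothesis $C_1 \sim C$; applying the automorphism $\Phi_\sigma$ gives $\sigma \cdot C_1 \sim \sigma \cdot C$. Also by hypothesis $\sigma \cdot C_1 \sim C$. Chaining these using symmetry and transitivity of $\sim$ yields $\sigma \cdot C \sim \sigma \cdot C_1 \sim C$, which is exactly the claim. Equivalently, and this is what names the lemma, one may concatenate three explicit slide sequences: a path from $C$ to $C_1$, then the $\sigma$-relabeling of this same path run in reverse from $\sigma \cdot C$ to $\sigma \cdot C_1$, and finally a path from $\sigma \cdot C_1$ back to $C$. The resulting sandwich is a \emph{conjugation} that realizes $\sigma$ at $C$ out of the (presumably more convenient) realization of $\sigma$ at $C_1$.

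I do not expect any serious obstacle, since the content is entirely formal. The only point requiring genuine care is verifying that $\Phi_\sigma$ really respects edges, i.e. that slide-admissibility is label-independent; for hexagonal boards this rests on the two-adjacent-holes criterion recorded earlier, which involves only hole positions. Once this automorphism property is established as the substantive step, the remainder is bookkeeping with the equivalence relation $\sim$, and I would present the transitivity chain in a single sentence.
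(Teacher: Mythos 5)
Your proposal is correct and follows essentially the same route as the paper: the paper's proof concatenates a path from $C$ to $C_1$, a path from $C_1$ to $\sigma\cdot C_1$, and the $\sigma$-relabeled reversal of the first path from $\sigma\cdot C_1$ to $\sigma\cdot C$, which is exactly your transitivity chain. Your explicit observation that relabeling is a puzzle-graph automorphism (because slide-admissibility depends only on positions, not labels) is the fact the paper uses tacitly in its reversal step, so you have simply made the key point more explicit.
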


\begin{proof}
    Suppose we have configurations $C_1$, $\sigma \cdot C_1$ as above in the same connected component as $C$. Then we can perform a sequence of slides to transform $C$ into $C_1$, and another sequence of slides to transform $C_1$ into $\sigma \cdot C_1$. By reversing the sequence of slides used to transform $C$ into $C_1$, we may transform $\sigma \cdot C_1$ into $\sigma \cdot C$, so $C$ and $\sigma \cdot C$ are in the same connected component of the puzzle graph.
\end{proof}

We now prove Theorem \ref{thm:parallelogram}, which we restate here:
\begin{thm*}
    For $m_1, m_2 \geq 3$ and $h \geq 3$, 
    \[ puz[P_{\varhexagon}(h; m_1,m_2)]  \]
    is maximally connected, that is, it has one large connected component containing all non-isolated configurations.
\end{thm*}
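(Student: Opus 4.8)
The plan is to prove the statement by induction on the number of holes $h$, with $h=3$ as the base case and every larger value of $h$ reduced to it. The base case is precisely the statement of Theorem \ref{thm:parallelogram} as originally phrased, so the essential new ingredient needed for the fully general statement is a \emph{monotonicity} principle: on a fixed parallelogram board, maximal connectivity for $h$ holes forces maximal connectivity for $h+1$ holes. Since the hypothesis is $h\ge 3$, the base case together with this principle proves the theorem for every admissible $h$.

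For the base case $h=3$ I would argue by induction on $m_1+m_2$. The smallest parallelograms, namely $P_{\varhexagon}(3;3,3)$ and the boundary shapes $P_{\varhexagon}(3;3,m_2)$ and $P_{\varhexagon}(3;m_1,3)$ that cannot be reduced further, serve as base cases and are checked by the breadth-first search computations of Section \ref{section:computations}. For the inductive step on board size one peels off an outer row or column: a non-isolated configuration can be slid so that the three holes sit in the outer strip, after which the Conjugation Lemma (Lemma \ref{conjugation}) transports any permutation realized on the smaller sub-board to the full board. The crucial point, and the reason three holes rather than two are required, is that three holes give enough room in the hexagonal geometry to realize \emph{both} a $3$-cycle and a transposition of tiles; hence there is no parity obstruction and one reaches \emph{all} non-isolated configurations, not only those differing by an even permutation.

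The heart of the argument for general $h$ is the inductive step $h\to h+1$, which I would carry out by a \emph{phantom-hole} (hole-forgetting) construction. Assume $puz[P_{\varhexagon}(h; m_1,m_2)]$ is maximally connected and let $C_1,C_2$ be arbitrary non-isolated configurations of $P_{\varhexagon}(h+1; m_1,m_2)$. Each $C_i$ possesses an adjacent pair of holes (this is exactly what non-isolation means in the hexagonal setting) and, since $h+1\ge 4$, at least one further hole; choose such a spare hole $q_i$, not belonging to the adjacent pair, and place a new label $N$ there to obtain a configuration $D_i$ of $P_{\varhexagon}(h; m_1,m_2)$. Because the adjacent pair is untouched, $D_1$ and $D_2$ are non-isolated, so by the inductive hypothesis they lie in the single large component of $puz[P_{\varhexagon}(h; m_1,m_2)]$ and are joined by a path. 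I then lift this path by forgetting $N$. A slide moving a \emph{real} tile uses two holes of the $h$-hole board, which are also holes of the $(h+1)$-hole board (whose holes are exactly the complement of the real tiles); the same slide is therefore a genuine edge of $puz[P_{\varhexagon}(h+1; m_1,m_2)]$. A slide moving $N$ only re-designates which of the $h+1$ holes carries the marker and moves no real tile, so after forgetting $N$ it is the identity on $P_{\varhexagon}(h+1; m_1,m_2)$. Discarding these no-op steps turns the path into a walk joining $C_1$ to $C_2$, and since $C_1,C_2$ were arbitrary, all non-isolated configurations of $P_{\varhexagon}(h+1; m_1,m_2)$ lie in one component.

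I expect the main obstacle to be the base case $h=3$: showing that three holes already remove the parity obstruction, so that odd permutations and not merely even ones become reachable, is what separates this result from the two-hole behavior studied later in the paper, and it is where the board-size induction and the computational base cases do the real work. By contrast, the passage from $h$ to $h+1$ is clean, since the phantom-hole argument formalizes the intuition that extra holes can only help; once the delicate three-hole case is settled, the entire range $h\ge 3$ follows. The only point requiring care in the inductive step is the choice of the spare hole $q_i$, which must avoid an adjacent hole pair so that $D_i$ remains non-isolated, and this is always possible because $h+1\ge 4$.
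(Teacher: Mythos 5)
Your phantom-hole induction on $h$ is correct, and it organizes the $h$-dependence genuinely differently from the paper. The paper proves the whole range $h\geq 3$ inside a single induction on board size: given a non-isolated configuration and tiles $a,b$, it slides (at least) three holes into the leftmost column, then by a three-case analysis on where $a$ and $b$ sit (both among the left $m_1$ columns; both in the new rightmost column; one in each, in which case the inductive hypothesis is first invoked to move $b$ out of the extremal columns) it brings $a$, $b$ and three holes into a common $m_1\times m_2$ sub-board, realizes the transposition $(a,b)$ there by the inductive hypothesis, and transfers it back via the Conjugation Lemma (Lemma \ref{conjugation}); transpositions then generate everything. Your alternative isolates the hole-count into a monotonicity lemma: relabel a spare hole as a tile $N$, connect the two relabeled configurations at level $h$, and project the path back, noting that a slide of a real tile remains legal when $N$'s cell is re-opened (the holes at level $h$ are a subset of the holes at level $h+1$) while a slide of $N$ projects to the identity. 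That lifting is sound, and your care about choosing the marked hole away from the pair witnessing a slidable tile is exactly the right fine print (one should say ``the pair of adjacent holes sharing an edge with a slidable tile'' rather than merely ``an adjacent pair of holes,'' but the fix is immediate). Your route even buys something the paper's write-up glosses over: the paper's induction needs its $3\times 3$ base case for \emph{every} $h\geq 3$, whereas Section \ref{section:computations} only reports $P_{\varhexagon}(3;3,3)$; your reduction needs only the $h=3$ bases.

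The weak point is the base case $h=3$, which carries the real content and where your sketch has a concrete gap. Parking the three holes in the outer strip and then letting the Conjugation Lemma ``transport any permutation realized on the smaller sub-board'' handles only tiles that already lie in that sub-board: no permutation realized there can touch a tile sitting in the peeled-off row or column, so maximal connectivity of the larger board does not yet follow. The missing maneuver is precisely the paper's Cases 2 and 3 --- shifting the three holes across to the opposite sub-board when both target tiles lie in the new strip, and, in the mixed case, first using the inductive hypothesis on the old board to relocate one tile before choosing which $m_1\times m_2$ sub-board to work in. Relatedly, your framing that three holes must realize ``both a $3$-cycle and a transposition'' to defeat parity is a red herring: the proof realizes an arbitrary transposition on an arbitrary non-isolated configuration, which immediately yields all permutations; no separate parity verification occurs, and the $h=2$ parity phenomenon simply reflects that transpositions are \emph{not} realizable there (Theorem \ref{paritytheorem}). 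With the case analysis restored, your two-layer plan (board-size induction at $h=3$, then the hole-forgetting step) is a complete and somewhat cleaner proof of the stated theorem.
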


\begin{proof}
    We prove this by simultaneous induction on $m_1$ and $m_2.$ We check the base case, where $m_1 = m_2 = 3$, computationally. See Section \ref{section:computations}.
    
    Now, suppose the result holds for some $m_1, m_2 \geq 3$. We will show that the result holds for $m_1 + 1, m_2$ and similarly for $m_1, m_2 + 1$. We check the case of $m_1+1,m_2$ here. In this proof, we orient the board so that we have $m_1 + 1$ columns of length $m_2$ running in the vertical direction, with neighboring hexagons in a column sharing a horizontal edge.
    
    Consider any configuration of $puz[P_{\varhexagon}(h; m_1+1,m_2)]$ where at least two holes are adjacent (so the configuration is nonisolated), and let $a,b$ be any two tiles. We claim that we can transpose $a$ and $b$, while leaving the rest of the board fixed. To prove this, we first show that we can perform a sequence of slides that moves $a,b$ and at least $3$ of the holes into some $m_1 \times m_2$ sub-board.
    
    We first slide at least 3 holes into the leftmost column, and call the resulting configuration of the board the \emph{ready configuration}. (Note that we may need to move tiles $a$ and $b$ during this process. We will account for this later.) There are three cases to consider.
    
    Case 1: If $a$ and $b$ are in the leftmost $m_1$ columns in the ready configuration, then $a$, $b$ and three holes are already contained in the sub-board consisting of the first $m_1$ columns, and we are done.
    
    Case 2: If $a$ and $b$ are both in the rightmost column, slide three holes from the leftmost column to the second column from the left. Then tiles $a$ and $b$, and three holes, are in the sub-board consisting of the rightmost $m_1$ columns.
    
    Case 3: The final case is when $a$ is in the rightmost column, but $b$ is not. We may then use the base case, applied to the left $m_1$ columns, perform a sequence of slides that leave three holes and tile $b$ in the part of the board excluding the first and last columns. Again $a$, $b$ and three holes are now in the rightmost $m_1$ columns.
    
    Hence in all three cases, we are able to move $a$, $b$ and three holes into some $m_1 \times m_2$ sub-board of the original board. By inductive assumption, we may then switch tiles $a$ and $b$, without disturbing the rest of the board. By the Conjugation Lemma, it follows that starting with any non-isolated configuration, we may switch tiles $a$ and $b$ while leaving the rest of the board fixed. 
    
    Since any permutation of the tiles (keeping the location of the holes fixed) can be achieved with a sequence of transpositions, this proves that we can achieve any permutation of the tiles given a fixed configuration of the holes. Since it is possible to move holes from any configuration with at least two holes adjacent to any other (ignoring the tiles labels), this implies that all non-isolated configurations of the board are in the same component of the puzzle graph.
\end{proof}

We offer a different proof of Theorem \ref{thm:parallelogram} and more general boards in Section \ref{section:patching} after we introduce a tool that allows us to extend what we know about the puzzle graph of small boards to the puzzle graph of bigger boards obtained by gluing together copies of the smaller boards.
\section{Patching Theorem}
\label{section:patching}

\begin{thm}[Patching Theorem] Let $B$ be a board of any shape with hexagonal tiles. Suppose that $B$ may be written as a union of two smaller boards (``patches'') $B_1$ and $B_2$ such that:
\begin{itemize}
    \item The puzzle graph of the board $B_1(h)$ with $h$ tiles removed has one large connected component containing all non-isolated configurations.
    \item The puzzle graph of the board $B_2(h)$ with $h$ tiles removed has one large connected component containing all non-isolated configurations.
    \item The intersection of $B_1 \cap B_2$ is a connected subset of the board and contains at least $k+1$ hexagons.
\end{itemize}
Then the puzzle graph of $B(h)$ has one connected component containing all non-isolated configurations.
\end{thm}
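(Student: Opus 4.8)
The plan is to reduce maximal connectivity of $puz[B(h)]$ to two ingredients: (i) full \emph{hole mobility} on $B$ — from any non-isolated configuration one can slide the $h$ holes onto any prescribed set of $h$ cells — and (ii) the ability to realize an arbitrary transposition $(a\,b)$ of two tile labels while fixing every other tile and all the holes. Granting these, any two non-isolated configurations $C,C'$ are joined as follows: by (i) I would slide $C$ to a configuration $C''$ whose holes sit exactly where those of $C'$ sit; then $C''$ and $C'$ differ only by a permutation of the tiles, which factors into transpositions, each realizable by (ii) within the configurations sharing those hole positions. Hence $C$ and $C'$ lie in one component, which is maximal connectivity. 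Note that $B=B_1\cup B_2$ is connected because $B_1,B_2$ are connected and $O:=B_1\cap B_2\neq\varnothing$, and that $O$ contains at least $h+1$ hexagons, so it can simultaneously hold all $h$ holes together with at least one tile.

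The core is (ii), which I would establish through the Conjugation Lemma together with the maximal connectivity of the two patches. Fixing a non-isolated configuration $C$ and labels $a,b$, I would first gather all $h$ holes into $O$, arranging two of them adjacently so that the restricted configurations below are non-isolated. If $a,b$ lie in a common patch $B_i$, I set $C_1:=C$. Otherwise they are split, say $a\in B_1\setminus B_2$ and $b\in B_2\setminus B_1$; since all holes now lie in $O\subseteq B_2$, the restriction of $C$ to $B_2$ is a non-isolated configuration of $B_2(h)$, so by the maximal connectivity of $B_2(h)$ I may slide $b$ into $O$ while keeping the $h$ holes in $O$ — room exists precisely because $\lvert O\rvert\ge h+1$. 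These slides occur inside $B_2$ and so fix every tile of $B_1\setminus B_2$, in particular $a$; I call the result $C_1$. In either situation $C_1$ lies in the component of $C$, and now $a$, $b$ and all $h$ holes lie in a single patch $B_i$ (with $i=1$ in the split case).

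I would then apply the maximal connectivity of $B_i(h)$ to $C_1$. Its restriction to $B_i$ is non-isolated with exactly $h$ holes, and its image under $(a\,b)$ has the same hole positions and is likewise non-isolated; hence both restrictions lie in the single large component of $puz[B_i(h)]$ and are joined by slides performed inside $B_i$. Since no hole ever leaves $B_i$ and no tile can slide into the hole-free region $B\setminus B_i$, these slides fix every tile outside $B_i$, and on $B$ they carry $C_1$ to $(a\,b)\cdot C_1$. Thus $C_1$ and $(a\,b)\cdot C_1$ both lie in the component of $C$, and the Conjugation Lemma then yields $(a\,b)\cdot C$ in the component of $C$, which proves (ii).

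The step I expect to be the genuine obstacle is (i), hole mobility — specifically the very first move of gathering all holes into $O$. The difficulty is a chicken-and-egg one: the patches' maximal connectivity only governs configurations in which all $h$ holes already lie inside a single patch, so it cannot directly be invoked to bring a stray hole across. I would resolve this by a direct shepherding argument: using $h\ge2$, I keep a pair of adjacent holes and walk it along a path in $B$ (whose existence uses the connectivity of $B$ and of $O$) to ferry one stray hole at a time from $B_1\setminus B_2$ or $B_2\setminus B_1$ into $O$, strictly decreasing the number of holes lying outside $O$ until none remain outside; once all holes occupy one patch, that patch's maximal connectivity gives free rearrangement of the holes there, completing (i). Care is needed with the hexagonal slide rule, under which a tile moves only when flanked by two mutually adjacent holes — which is exactly what makes an adjacent hole-pair the natural ``transport vehicle'' in this shepherding step.
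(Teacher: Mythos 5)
Your proposal is correct and follows essentially the same route as the paper's proof: reduce to realizing a transposition via the Conjugation Lemma, gather the holes into $B_1\cap B_2$, ferry the stray tile into the overlap using the connectivity of one patch, and perform the swap using the connectivity of the other. The only difference is that you spell out the hole-mobility step (which the paper dispatches with ``by sliding tiles as needed'') in more detail, which is a welcome elaboration rather than a departure.
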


\begin{proof}
    Let $C$ be a non-isolated configuration of the board $B(h)$, and let $a$ and $b$ be any two tiles. By the Conjugation Lemma, it suffices to show that there is a configuration $C'$ in the same connected component of the puzzle graph as $C$, such that $(a,b) \cdot C'$ is also in the same connected component.
    
    By sliding tiles as needed, we may assume that at least $k$ holes are found in the intersection $B_1 \cap B_2$. If $a$ and $b$ are in the same patch (say, $B_1$), then we may use the connectedness of the puzzle graph of $B_1$ to switch $a$ and $b$, without disturbing the rest of the board.
    
    Suppose without loss of generality that $a$ is in $B_1 \backslash B_2$ while $b$ is in $B_2 \backslash B_1$. Then we may use the connectedness of $B_2$ to move tile $b$ into $B_1 \cap B_2$, without disturbing any tiles of $B_1 \backslash B_2$, and in such a way that there remain at least $h$ holes in $B_1 \cap B_2$. We may then use the connectedness of the puzzle graph of $B_1$ to switch tiles $a$ and $b$, without disturbing the rest of the board. This proves the lemma.
\end{proof}

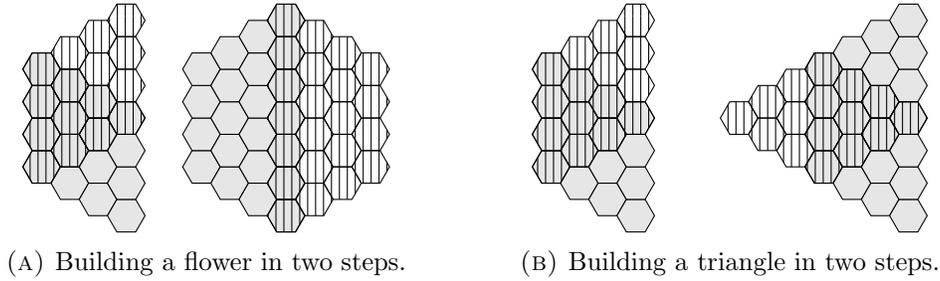
\begin{figure}
    \begin{subfigure}[b]{0.4 \textwidth}
    \centering
    \begin{tikzpicture}[scale=0.5]
    \foreach \j in {0,...,3}{
        \foreach \i in {0,...,3}{
            \node[preaction={fill, gray!20}, smhex] at ({(0.75 * \i},{(\j-\i/2)*sin(60)}) {};}  }
    \foreach \j in {0,...,3}{
        \foreach \i in {0,...,3}{
            \node[smhex, pattern=vertical lines] at ({(0.75 * \i},{(\j+\i/2)*sin(60)}) {};}  } 
    \begin{scope}[xshift=4.25 cm]
    \foreach \i in {0,...,3}{
        \pgfmathsetmacro\last{3+\i} 
        \foreach \j in {0,...,\last}{
            \node[preaction={fill, gray!20}, smhex]  at ({(0.75 * \i},{(\j-\i/2)*sin(60)}) {};}  }
    \foreach \i in {3,...,6}{
        \pgfmathsetmacro\first{\i-4}
        \foreach \j in {\first,...,5}{
            \node[pattern=vertical lines, smhex]  at ({(0.75 *\i},{(\j-\i/2+1)*sin(60)}) {};}  }
    \end{scope}
    \end{tikzpicture}
    \caption{Building a flower in two steps.}
    \end{subfigure}
    \hspace{0.2 in}
    \begin{subfigure}[b]{0.4 \textwidth}
    \centering
    \begin{tikzpicture}[scale=0.5]
    \foreach \j in {0,...,3}{
        \foreach \i in {0,...,3}{
            \node[preaction={fill, gray!20}, smhex] at ({(0.75 * \i},{(\j-\i/2)*sin(60)}) {};}  }
    \foreach \j in {0,...,3}{
        \foreach \i in {0,...,3}{
            \node[smhex, pattern=vertical lines] at ({(0.75 * \i},{(\j+\i/2)*sin(60)}) {};}  } 
    \begin{scope}[xshift=4.25 cm]
    \foreach \i in {0,...,3}{
        \pgfmathsetmacro\xcor{3+0.75*\i}
        \pgfmathsetmacro\last{3+\i} 
        \foreach \j in {0,...,\last}{
            \node[preaction={fill, gray!20}, smhex]  at ({\xcor},{(\j-\i/2)*sin(60)}) {};}  }
    \foreach \i in {1,...,4}{
        \foreach \j in {1,...,\i}{
            \node[pattern=vertical lines, smhex]  at ({(0.75 *\i},{(1+\j-\i/2)*sin(60)}) {};}}
    \foreach \i in {5,...,7}{
        \pgfmathsetmacro\start{\i-3}
        \foreach \j in {\start,...,4}{
            \node[pattern=vertical lines, smhex]  at ({(0.75 *\i},{(1+\j-\i/2)*sin(60)}) {};}}
    \end{scope}
    \end{tikzpicture}
    \caption{Building a triangle in two steps.}
    \end{subfigure}
    \caption{Building boards from parallelogram patches.}
    \label{hexpatch}
\end{figure}

\begin{rmk}
    We note that the last hypothesis in the Patching Lemma can be weakened. As long as the intersection $B_1 \cap B_2$ contains at least two adjacent positions, we can always move at least $k$ holes and one additional tile into $B_1 \cap B_2$, in such a way that we have two adjacent holes--and hence, a non-isolated configuration. We will not need this weaker hypothesis for any of our examples.
\end{rmk}

Corollary \ref{cor:patchedboards}, restated below, is an immediate consequence of the Patching Theorem.
We note that Theorem \ref{thm:connectionnumber} is a consequence of the corollary, together with Theorem \ref{paritytheorem}.
\begin{cor*} 
    Suppose $B$ is either:
    \begin{enumerate}
        \item A flower-shaped board $F_{\varhexagon}(h;m)$ for $m \geq 3$.
        \item A triangular board $T_{\varhexagon}(h;m)$ for $m \geq 5$.
    \end{enumerate}
    Then the puzzle graph for the board $B$ with $3$ or more tiles removed has one large connected component containing all non-isolated configurations. 
\end{cor*}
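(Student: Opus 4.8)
The plan is to realize each board as a union of two parallelogram-shaped patches and then invoke the Patching Theorem, using Theorem \ref{thm:parallelogram} to certify that each patch is maximally connected. Concretely, for a flower $F_{\varhexagon}(m)$ with $m \geq 3$ I would write the board as $B = B_1 \cup B_2$, where $B_1$ and $B_2$ are the two overlapping parallelogram patches indicated in Figure \ref{hexpatch}(a); for a triangle $T_{\varhexagon}(m)$ with $m \geq 5$ I would use the decomposition of Figure \ref{hexpatch}(b). In each case the two patches are genuine parallelograms $P_{\varhexagon}(m_1,m_2)$ with both $m_1, m_2 \geq 3$, their union is all of $B$, and their overlap $B_1 \cap B_2$ is a connected parallelogram region.

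Next I would check the three hypotheses of the Patching Theorem for the board with $h \geq 3$ holes. Since each patch is a parallelogram with both side lengths at least $3$, Theorem \ref{thm:parallelogram} gives that $puz[B_1(h)]$ and $puz[B_2(h)]$ each have a single large component containing all non-isolated configurations, which is exactly the first two hypotheses. For the third hypothesis, I would verify that $B_1 \cap B_2$ is connected and contains enough cells for the patching argument: the overlap in both decompositions contains at least four hexagons, so (using $h \geq 3$) we can slide three holes into it while keeping two of them adjacent, guaranteeing a non-isolated configuration supported inside the overlap. The Patching Theorem then yields that $puz[B(h)]$ has one large component containing all non-isolated configurations, which is the claim.

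Finally I would pin down the size thresholds. For flowers the symmetric decomposition works already at $m = 3$, where the smallest patches are $3 \times 3$ parallelograms; this is why the flower case only requires $m \geq 3$. For triangles the inscribed parallelogram patches are necessarily smaller relative to the side length, and one checks that fitting two patches each of dimension at least $3$, with a connected overlap of at least four cells, forces $m \geq 5$, matching the hypothesis.

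The main obstacle I expect is the geometric bookkeeping rather than any new puzzle-theoretic idea: one must exhibit explicit parallelogram patches whose union is exactly the flower (respectively the triangle), confirm that both patches have both side lengths at least $3$, and confirm that the overlap is connected and large enough—and, in the triangular case, verify that these constraints are simultaneously satisfiable precisely when $m \geq 5$. Once the decomposition is in hand, the conclusion is an immediate application of the Patching Theorem together with Theorem \ref{thm:parallelogram}.
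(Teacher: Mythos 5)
Your overall strategy---certify each patch via Theorem \ref{thm:parallelogram} and glue with the Patching Theorem---is exactly the paper's, but the specific decomposition you assert does not exist, and it is precisely the step you defer as ``geometric bookkeeping.'' Neither a flower nor a triangle is the union of \emph{two} parallelogram patches. For $F_{\varhexagon}(3)$ this is already a counting obstruction: the board has $19$ hexagons, while the largest parallelogram sub-board of $F_{\varhexagon}(3)$ is a $3\times 3$ with $9$ hexagons, so two parallelogram patches cover at most $18$ cells. Likewise $T_{\varhexagon}(5)$ has $15$ hexagons but its largest parallelogram sub-board with both side lengths at least $3$ is again a $3\times 3$, and two such patches overlapping in at least $4$ cells cover at most $14$. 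More structurally, a triangular board has three sharp corners and a parallelogram only two, so no two-parallelogram cover works for any $m$. As written, your proof therefore fails at the decomposition step for both families.

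The repair is to iterate the Patching Theorem, whose hypotheses do not require the patches to be parallelograms---only that each patch already has a maximally connected puzzle graph---and this is what the paper does. For the flower, first glue two copies of $P_{\varhexagon}(m,m)$ along an $m\times m$ triangular overlap (which has $m(m+1)/2\geq 6$ cells for $m\geq 3$) to obtain a trapezoid whose puzzle graph is maximally connected, and then glue two such trapezoids to obtain $F_{\varhexagon}(m)$. For the triangle, glue two parallelograms as above and then a third parallelogram patch meeting them in a central triangle. Your verification of the overlap condition (a connected intersection with enough cells that three holes, two of them adjacent, fit inside it) is correct and carries over verbatim to each gluing stage of the iterated construction; the proposal just stops one gluing short of covering either board.
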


\begin{proof}
    We know that the puzzle graph of $P_{\varhexagon}(3;m_1,m_2)$ is maximally connected as long as $m_1,m_2 \geq 3$. Our strategy is to build the desired boards out of parallelogram-shaped patches, and apply the Patching Theorem.
    
    For the flower-shaped board, suppose we take a $F_{\varhexagon}(n)$ for $n \geq 3$. Then we may start by overlapping two copies of $P_{\varhexagon}(n)$, so that the overlap is an $n \times n$ equilateral triangle (and hence has more than 4 tiles). This creates a trapezoidal patch. We may then patch together two such trapezoids to form a flower. See Figure \ref{hexpatch}.
    
    For a triangular board, the case is even easier. We begin with two parallelograms patched together as above, then add a third parallelogram patch so that all three patches overlap in central triangle. Again, see Figure \ref{hexpatch}.
\end{proof}

\section{Parity}
\label{section:parity}

We now consider flower, triangle or parallelogram-shaped boards with two tiles removed. Here, parity is a key consideration. In addition, we encounter some obstacles to connectivity of the puzzle graph, which are not present for square puzzles.

\begin{defn}
We say a hexagon is a \emph{tight corner} of a board $B$ if the tile has exactly two neighbors, which form a $2 \times 2$ triangle with the tile in question.
\end{defn}
Hence a parallelogram-shaped board has two tight corners, while a triangular board has three. A flower shaped board has none. Tight corners create some difficulties for boards with only two holes.

\begin{lem}\label{tightcorner}Consider a board with at least one tight corner. Let $C$ be a configuration of the board with exactly two holes, and with tile $a$ is a given corner. Then every configuration $C'$ in the same connected component of the puzzle graph has either a hole or the tile $a$ in the tight corner which is occupied by tile $a$ in $C$
\end{lem}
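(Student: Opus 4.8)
The plan is to pass to the planar dual graph exactly as in the proof of Theorem~\ref{thm:skiny}, so that tiles and holes become labelled and unlabelled vertices, and a slide corresponds to moving a label into an unlabelled vertex that, together with a second adjacent unlabelled vertex, forms a triangle with the label. In this language a tight corner is a vertex $c$ with exactly two neighbours $n_1,n_2$, and these are adjacent to each other, so $\{c,n_1,n_2\}$ is a triangle. I would first record the single geometric fact that drives everything: in the hexagonal tessellation any two edge-adjacent hexagons have exactly two common neighbours, one at each endpoint of their shared edge. Applied to the pair $c,n_2$, whose only board-neighbours of $c$ are $n_1,n_2$, this forces the unique \emph{on-board} common neighbour of $c$ and $n_2$ to be $n_1$; symmetrically the only on-board common neighbour of $c$ and $n_1$ is $n_2$.

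From this I would extract two local consequences, both using that there are exactly two holes. First, the only tile that can be slid into $c$ is one sitting at $n_1$ or $n_2$, and such a slide needs $c$ together with the \emph{other} of $n_1,n_2$ to be holes; in particular a tile at $n_2$ could enter $c$ only if the partner hole were at $n_1$. Second, a tile at $n_1$ can be slid to an external cell $w\notin\{c,n_2\}$ only if there are two adjacent holes adjacent to $n_1$ with $w$ one of them, and since $c$ is adjacent to none of these external cells, a hole sitting at $c$ can never play the role of the required partner hole.

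With these in hand I would prove, by induction on the number of slides performed starting from $C$, the invariant: in every reachable configuration either (1) $a$ occupies $c$, or (2) $a$ occupies $n_1$ or $n_2$ and $c$ is a hole. The base case is immediate, since $a$ starts in $c$. For the step, from a type-(1) configuration: if the slide fixes $a$ it stays type (1); if it moves $a$, then both $n_1,n_2$ must be holes and $a$ moves into one of them, landing in type (2). From a type-(2) configuration, say $a$ at $n_1$: a slide filling $c$ can be performed only by $a$ (a tile at $n_2$ would require the partner hole at $n_1=a$, impossible by the first consequence), returning us to type (1); and $a$ itself can move only to $c$ or to $n_2$, since one of the two holes is forced to sit at $c$, and by the second consequence that hole constrains every move of $a$ to a neighbour of $c$. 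Every other slide leaves both $a$'s position and the occupant of $c$ untouched, preserving type (2). Since (1) and (2) together say $c$ is occupied by $a$ or by a hole in every reachable configuration, the invariant is exactly the statement of the lemma.

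I expect the main obstacle to be isolating the \emph{right} invariant: the morally correct idea that ``$a$ never leaves the triangle $\{c,n_1,n_2\}$'' is not phrased in an inductively stable way, whereas the dichotomy above is. The delicate point is the type-(2) step, where one must simultaneously rule out a foreign tile entering $c$ and $a$ leaking out of the corner. Both exclusions rest entirely on the scarcity of holes combined with the fact that $c$'s only neighbours are $n_1,n_2$, so I would take care to state the two local consequences precisely enough that, once one hole is pinned at $c$, these two exclusions follow immediately.
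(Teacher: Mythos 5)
Your proof is correct and takes essentially the same approach as the paper's: both arguments rest on the observation that extracting $a$ from the tight corner forces both of its neighbours to be holes, after which the only on-board common neighbour of the corner and the remaining adjacent hole is the cell occupied by $a$, so only $a$ can slide and the corner is always held by $a$ or a hole. Your version simply makes explicit, as a two-case induction with the common-neighbour fact stated up front, what the paper argues informally.
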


\begin{proof}
    If $C$ is an isolated configuration, there is nothing to prove. If $C$ is non-isolated, then $C$ has a single pair of adjacent holes, and so does any $C'$ in the same connected component of the puzzle graph. 
    
    Suppose tile $a$ is in its original position. To slide tile $a$ out of the tight corner, we must move the two holes into the hexagons neighboring the corner. We may then slide tile $a$ into either hole, leaving a hole in the corner. Once this happens, however, the only tile capable of sliding is $a$. We may either slide tile $a$ back to its original position, or slide $a$ into the other hole--which leaves a hole in the tight corner.
    
    Hence if $a$ is in its original tight corner, sliding $a$ out of the tight corner leaves a hole in its place. If $a$ is no longer in its corner, all possible slides result in the corner being occupied by a hole or $a$ itself. This proves the lemma.
\end{proof}

\begin{rmk}
We note that a trimmed parallelogram board may be obtained by taking a parallelogram-shaped board, and removing the two tight corners. Also, a trimmed triangle may be obtained by taking a board shaped like an equilateral triangle, and removing the three tight corners. A trimmed parallelogram $P_{\varhexagon}^{tr}(m_1,m_2)$ has no tight corners for $m_1,m_2 \geq 3$, and a trimmed triangle $T_{\varhexagon}^{tr}(m)$ has no tight corners for $m \geq 4$.
\end{rmk}

\begin{lem}\label{corner_count}
    Let $B$ be the board $T_{\hexagon}(m)$ for $m \geq 4$, or the parallelogram-shaped $P_{\varhexagon}(m_1,m_2)$ where $m_1,m_2 \geq 3$. Let $B'$ be the trimmed board obtained by removing the tight corners of $B$, and suppose the puzzle graph of the board $B'$ with two tiles removed has $c$ connected components containing non-isolated configurations.
    
    Then the number of connected components of the puzzle graph of $B$ with two tiles removed, containing non-isolated configurations, is 
        \begin{itemize}
            \item $\displaystyle 2c  \binom{m_1m_2-2}{2}$ for a parallelogram,
            \item $\displaystyle 6c \binom{m(m+1)/2-2}{3}$ for a triangle.
        \end{itemize}
\end{lem}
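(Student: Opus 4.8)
The plan is to set up a bijection between the connected components of $puz[B(2)]$ that contain non-isolated configurations and pairs consisting of (i) an assignment of a distinct tile label to each tight corner of $B$, together with (ii) a connected component of $puz[B'(2)]$. Write $N$ for the number of positions of $B$ and $r$ for its number of tight corners, so $r=2$ for the parallelogram and $r=3$ for the triangle, and $B$ carries $N-2$ tiles while $B'$ carries $N-2-r$ tiles. The heart of the argument is a refinement of Lemma~\ref{tightcorner}. Call a configuration \emph{corner-filled} if every tight corner holds a tile; I will show that, starting from such a configuration, any slide that moves a corner tile leads into a \emph{dead end}. Indeed, for the designated tile $a_i$ at a corner $v_i$ to slide, both neighbors $p,q$ of $v_i$ (which are mutually adjacent) must be holes, consuming both holes at once; once $a_i$ leaves $v_i$, the proof of Lemma~\ref{tightcorner} shows that the only mobile tile is $a_i$ itself, which merely shuffles among $\{v_i,p,q\}$. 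Consequently every maximal sub-path of a slide path on which some corner contains a hole is a loop that returns to the corner-filled configuration from which it departed.

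Given this, I will first check that every relevant component $K$ contains a corner-filled configuration. Since the two holes of a non-isolated configuration are adjacent and the tight corners are pairwise non-adjacent for the stated sizes, at most one corner can contain a hole at a time; if a corner $v_i$ holds a hole, its partner hole sits at a neighbor $p$, so sliding the tile at the other neighbor $q$ into $v_i$ fills $v_i$ and pushes both holes into $B'$, leaving the remaining corners untouched. Iterating fills every corner. By Lemma~\ref{tightcorner} the tile occupying each corner is then constant across $K$, so $K$ determines a well-defined ordered $r$-tuple $(a_1,\dots,a_r)$ of distinct corner labels.

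Next I will define the projection. Deleting the corner positions from a corner-filled configuration and relabeling the remaining $N-2-r$ tiles by the unique order-preserving bijection onto $\{1,\dots,N-2-r\}$ yields a configuration of $B'(2)$; since a slide of $B$ that fixes all corner tiles is literally a slide of $B'$, the dead-end observation shows that any two corner-filled configurations in $K$ project into the same component $K'$ of $puz[B'(2)]$, because excising the corner excursions from a connecting path in $B$ leaves a genuine path in $B'$. This makes $K\mapsto\bigl((a_1,\dots,a_r),K'\bigr)$ well defined. For injectivity, if two components share the same data then their representatives project into the same $K'$; lifting a connecting $B'$-path back to $B$ with all corner tiles frozen shows the representatives are connected in $puz[B(2)]$, so the components coincide. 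For surjectivity, any component of $puz[B'(2)]$ together with any choice of distinct corner labels is realized by inserting the chosen tiles into the corners of a relabeled $B'$-configuration.

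Finally I will count the target set. The number of ordered $r$-tuples of distinct labels drawn from the $N-2$ tiles of $B$ is
\[
(N-2)(N-3)\cdots(N-1-r)=r!\binom{N-2}{r},
\]
and each such tuple pairs with any of the $c$ components of $puz[B'(2)]$, giving $r!\,c\binom{N-2}{r}$ components in total. Substituting $r=2$ and $N=m_1m_2$ yields $2c\binom{m_1m_2-2}{2}$ for the parallelogram, while $r=3$ and $N=m(m+1)/2$ yields $6c\binom{m(m+1)/2-2}{3}$ for the triangle, as claimed. The main obstacle is establishing the dead-end behavior of corner excursions; once that is in hand, both the bijection and the enumeration are routine.
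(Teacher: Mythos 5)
Your proposal is correct and follows essentially the same route as the paper: both arguments rest on the observation (via Lemma~\ref{tightcorner}) that a corner excursion is a dead end that must be undone before any other tile can move, so that components of $puz[B(2)]$ correspond to a choice of ordered corner labels together with a component of $puz[B'(2)]$, yielding the count $r!\,c\binom{t}{r}$. Your write-up merely makes explicit (as a verified bijection, with the reduction to corner-filled configurations spelled out) what the paper's proof states more tersely.
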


\begin{proof}
    Let $C_1$ be a non-isolated configurations of the board $B$ with no holes in tight corners, and let $C_2$ be another such configuration in the same component of the puzzle graph. Let $C_1'$ be the configuration of $B'$ induced by $C_1$. As in the proof of Lemma \ref{tightcorner}, we may slide a tile out of a tight corner of the board $B$; however, this tile must be returned to its original position, without disturbing the rest of the board, before any slides involving other tiles can take place. Hence we may assume  that the sequence of slides transforming $C_1$ into $C_2$ involves no slides into or out of tight corners. The possibilities for $C_2$ are then in bijection with the non-isolated configurations of $B'$ with two holes removed, which are in the same connected component of the puzzle graph of $B'(2)$ as $C_1'.$ 
    
    Let $t$ be the total number of tiles on the board $B$. It follows that configurations with no holes in their tight corners may be partitioned into $\displaystyle c \cdot k! \binom{t}{k}$ connected components, where $k$ is the number of tight corners of $B$. We have $\displaystyle k! \binom{t}{k}$ options for the placement of tiles in the tight corners, and we obtain $c$ distinct connected components for each choice of tight corner tiles. Since every non-isolated configuration is in the same connected component as a configuration with no holes in tight corners, we have in fact given a complete count of connected components containing non-isolated configurations, and the lemma is proved.
\end{proof}

 Having addressed the problem of tiles getting stuck in tight corners, we now turn to a more fundamental obstacle to connectivity of the puzzle graph: parity of permutations on tile labels. Here we prove Theorem \ref{paritytheorem} that states that parallelogram boards with two holes have the weak parity property.
\begin{thm*}
 The board $P_{\varhexagon}(2; m_1,m_2)$ has the weak parity property for $m_1, m_2 \geq 3.$
\end{thm*}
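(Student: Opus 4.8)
The plan is to exhibit a sign invariant preserved by every legal slide that detects the parity of the tile permutation, adapting the classical ``blank-walk'' argument used for the 15 Puzzle. The essential new difficulty is that the dual graph of a hexagonal board is a region of the triangular lattice, which is \emph{not} bipartite; so one cannot simply $2$-color the positions and track the parity of a single hole's walk as in the square case. The observation that rescues the argument is that, with exactly two holes, a configuration is non-isolated precisely when the two holes are adjacent, and that this adjacency is preserved by every slide: if a tile at $P$ slides into one of two adjacent holes $A,B$, the resulting holes $\{P,B\}$ or $\{P,A\}$ are again adjacent, since $P,A,B$ form a triangle. Thus the unordered hole-pair is always an \emph{edge} of the dual graph, and I will track the \emph{oriented} hole-edge.

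First I would set up the dual graph and fix a proper $3$-coloring $c\colon V \to \mathbb{Z}/3$ of the triangular lattice so that the endpoints of every edge have colors differing by $\pm 1 \pmod 3$ and every triangular face carries all three colors (for instance $c(ae_1+be_2)=a-b$); restricting gives such a coloring on $P_{\varhexagon}(2;m_1,m_2)$. For an ordered pair $(u,v)$ of adjacent vertices I set $\sigma(u,v)=+1$ if $c(v)-c(u)\equiv 1$ and $\sigma(u,v)=-1$ if $c(v)-c(u)\equiv -1 \pmod 3$; by construction $\sigma(v,u)=-\sigma(u,v)$, so the two orientations of an edge carry opposite signs.

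The crux of the proof is to show that every slide flips $\sigma$. Labeling the holes $H_1,H_2$ and tracking the ordered pair of their positions, a slide replaces an oriented hole-edge $(P_1,P_2)$ (the moving tile sitting at the third vertex $P$ of an incident face) by either $(P,P_2)$ or $(P_1,P)$. Writing the colors of $P_1,P_2,P$ as $a,b,e$ with $\{a,b,e\}=\{0,1,2\}$, the relation $a+b+e\equiv 0 \pmod 3$ forces $c(P_2)-c(P)\equiv c(P_1)-c(P_2)$ and $c(P)-c(P_1)\equiv c(P_1)-c(P_2) \pmod 3$, whence $\sigma(P,P_2)=\sigma(P_1,P)=-\sigma(P_1,P_2)$. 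Hence $\sigma$ changes sign at every slide; this is the one genuinely delicate computation, and it works precisely because each move happens inside a rainbow triangular face.

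Finally I would assemble the pieces. Take two non-isolated configurations with the holes in the same two positions, joined by a sequence of $N$ slides. Viewing each slide as a transposition of position-contents on the labeled set $\{1,\dots,n,H_1,H_2\}$, the sign of the total rearrangement is $(-1)^N$; since the tile positions and the hole positions each map onto themselves, this factors as $\operatorname{sgn}(\pi)\cdot\operatorname{sgn}(\text{hole part})$, where $\pi$ is the tile permutation. Because $\sigma$ flips at each slide and $\sigma(v,u)=-\sigma(u,v)$, the value of $(-1)^N$ is $+1$ when the holes return unswapped and $-1$ when they are swapped, i.e.\ $N\equiv[\text{holes swapped}]\pmod 2$. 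In the unswapped case the hole part is trivial and in the swapped case it is the transposition $(H_1\,H_2)$, so in both cases $\operatorname{sgn}(\pi)=+1$. Thus $\pi$ is even, which is exactly the weak parity property. I expect the color-flip identity of the third step (together with the preliminary observation that the two holes stay adjacent, so the oriented-edge state is well defined) to be the only substantive point; the rest is bookkeeping with signs of permutations.
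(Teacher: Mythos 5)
Your proof is correct, and it takes a genuinely different route from the one in the paper. The paper also constructs a mod-$2$ invariant, but a different one: it labels the two holes $0$ and $-1$, passes to the dual graph viewed as a square grid with one family of diagonals, and shows that the sum of the taxicab-distance parities of the two labeled holes (measured from a fixed corner) plus the sign of the permutation of \emph{all} positions is preserved by horizontal and vertical slides; diagonal slides, which would break this invariant, are then eliminated by replacing each one with a vertical slide followed by a horizontal one. Your argument instead exploits the fact that the two holes always occupy an edge of the dual graph and attaches a sign to that \emph{oriented} edge via the standard $3$-coloring of the triangular lattice; your rainbow-triangle computation shows that \emph{every} slide flips this sign, so no case analysis on slide directions and no move-replacement trick are needed. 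This buys two things. First, the invariant is intrinsic to the triangular lattice rather than to a particular parallelogram embedding, so your argument applies verbatim to any sub-board with two holes and yields Corollary~\ref{cor:weakparity} directly, without embedding into a larger parallelogram. Second, your explicit handling of the two terminal cases (labeled holes returned unswapped versus swapped), concluding that the tile permutation is even either way, closes a bookkeeping point that the paper's version leaves implicit: its invariant only directly shows that the sign of the tile permutation equals the sign of the hole-label permutation. One trivial caveat: the sample coloring $c(ae_1+be_2)=a-b$ is proper only when the third edge direction of the lattice is $e_1-e_2$; with the paper's drawing convention (diagonals in direction $e_1+e_2$) one should take $c=a+b$ instead. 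This is purely a matter of coordinates and does not affect the argument.
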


\begin{proof} We define an \textbf{augmented configuration} of $P_{\varhexagon}(2; m_1,m_2)$ to be a configuration of $P_{\varhexagon}(2; m_1,m_2)$ where the two holes have been labeled $0$ and $-1$. The rules for sliding tiles remain the same as when the holes were unlabeled. However, labeling allows us to keep track of the relative position of the two holes, as we slide the tiles.

We pass from the board $P_{\varhexagon}(2; m_1,m_2)$ to the corresponding dual graph, which with our conventions is a grid with diagonal edges joining the upper-right and lower-left corners of each square cell. See Figure \ref{2bym} for an small example. Let $\bar{C}$ be an augmented configuration obtained by labeling the holes of $C$ with $-1$ and $0$. 

For any augmented configuration $\bar{C'}$ whose underlying configuration is in the same component of $puz[P_{\varhexagon}(2; m_1,m_2)]$ as $C$, we define the \emph{augmented parity} of $\bar{C'}$ as follows. We ignore the diagonals of the dual graph, and consider only the vertical and horizontal edges. Let $p_1$ be the parity (zero for odd, one for even) of the taxicab distance from the hole labeled $-1$ to the upper-left vertex of the grid. Let $p_2$ be the parity of the taxicab distance of the hole labeled $0$ to the upper-left vertex. Finally, let $p_3$ be the parity of the permutation of \emph{all} positions on the board (both tiles and labeled holes) needed to transform $\bar{C}$ into $\bar{C}'$. We define the augmented parity $\bar{C}'$ as the parity of the sum $p_1+p_2+p_3$.

We now claim that that we can transform $C$ into $C'$ using a sequence of moves which preserve the augmented parity.

First, consider the case where we slide a tile vertically or horizontally. A single slide transposes a hole and a tile, which changes the parity $p_3$ of the overall permutation of positions on the board. In addition, with each vertical or horizontal slide, exactly one of the holes moves a distance of one in the horizontal or vertical direction, while the other hole does not move at all. So exactly one of $p_1$ or $p_2$ changes. Hence, each vertical or horizontal slide changes exactly two of the $p_i$'s: $p_3$ always changes, and exactly one of $p_1$ or $p_2$ changes. It follows that such a slide does not change the augmented parity of the augmented configuration.

It remains to deal with diagonal slides. Note that when a tile slides diagonally, the parity of its taxicab distance from a given point will not change, as a diagonal slide is equivalent to moving one position in the horizontal direction and one position in the vertical direction. Hence a diagonal slide changes the augmented parity because $p_1$ and $p_2$ remain unchanged while $p_3$ changes. 

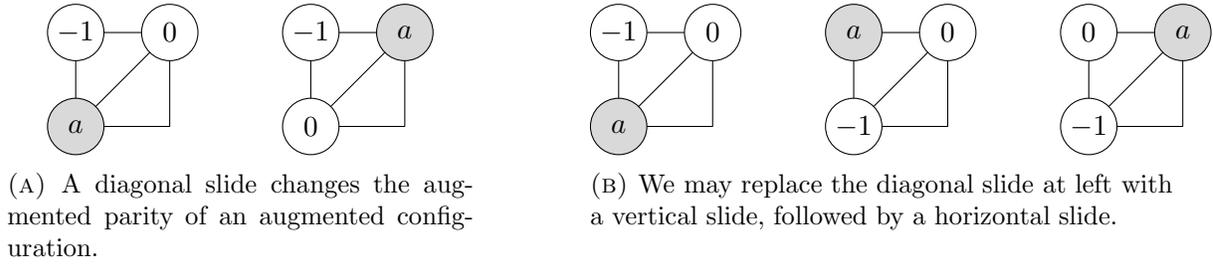
\begin{figure}
\centering
\begin{subfigure}[t]{0.4 \textwidth}
\centering
\begin{tikzpicture}
[scale=1.25, dot/.style= {shape=circle, draw, fill = white, minimum size = 0.75cm, inner sep = 0 pt,anchor=center},
gdot/.style= {shape=circle, draw, fill = gray!30, minimum size = 0.75cm, inner sep = 0 pt,anchor=center}]
\draw(0,0) grid (1, 1);
\draw(0,0) -- (1,1);
\node[gdot] at (0,0) {$a$};
\node[dot] at (0,1) {$-1$};
\node[dot] at (1,1) {$0$};
\node at (1,0) {};
\begin{scope}[xshift=2.5cm]
\draw(0,0) grid (1, 1);
\draw(0,0) -- (1,1);
\node[dot] at (0,0) {$0$};
\node[dot] at (0,1) {$-1$};
\node[gdot] at (1,1) {$a$};
\node at (1,0) {};
\end{scope}
\end{tikzpicture}
\caption{A diagonal slide changes the augmented parity of an augmented configuration.}
\end{subfigure}
\hfill
\begin{subfigure}[t]{0.5 \textwidth}
\centering
\begin{tikzpicture} 
[scale=1.25, dot/.style= {shape=circle, draw, fill = white, minimum size = 0.75cm, inner sep = 0 pt,anchor=center},
gdot/.style= {shape=circle, draw, fill = gray!30, minimum size = 0.75cm, inner sep = 0 pt,anchor=center}]
\draw(0,0) grid (1, 1);
\draw(0,0) -- (1,1);
\node[gdot] at (0,0) {$a$};
\node[dot] at (0,1) {$-1$};
\node[dot] at (1,1) {$0$};
\node at (1,0) {};
\begin{scope}[xshift=2.5cm]
\draw(0,0) grid (1, 1);
\draw(0,0) -- (1,1);
\node[dot] at (0,0) {$-1$};
\node[gdot] at (0,1) {$a$};
\node[dot] at (1,1) {$0$};
\node at (1,0) {};
\end{scope}
\begin{scope}[xshift=5cm]
\draw(0,0) grid (1, 1);
\draw(0,0) -- (1,1);
\node[dot] at (0,0) {$-1$};
\node[dot] at (0,1) {$0$};
\node[gdot] at (1,1) {$a$};
\node at (1,0) {};
\end{scope}
\end{tikzpicture}
\caption{We may replace the diagonal slide at left with a vertical slide, followed by a horizontal slide.}
\end{subfigure}
\caption{Dealing with diagonal slides in the proof of \ref{paritytheorem}.}
\label{fig:diagonalslide}
\end{figure}

Fortunately, we claim that no diagonal slides are required to reach $\bar{C}'.$ Indeed, suppose we want to slide tile $a$ up and to the right. Then tile $a$ must have either a hole immediately above it, or a hole immediately to the right. In the first case, we can achieve the same effect on the underlying configuration by sliding tile $a$ first vertically up, then horizontally to the right. See Figure \ref{fig:diagonalslide}. In the second case, we can slide tile $a$ first to the right, then vertically. The case of sliding a tile down and to the left is similar.

Hence any diagonal slide may be replaced with a sequence of vertical and horizontal slides, and the proof is complete.
\end{proof}

We can now prove Corollary \ref{cor:weakparity} that we restate below. 

\begin{cor*}
    Any board with exactly two holes that is a sub-board of a parallelogram board has the weak parity property. Hence boards of any shape that we explore in this paper with exactly two holes have the weak parity property.
\end{cor*}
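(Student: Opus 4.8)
The plan is to reduce the statement to Theorem \ref{paritytheorem} by \emph{embedding} the sub-board into a genuine parallelogram and freezing the tiles outside it. Let $B'$ be a sub-board of a parallelogram $P_{\varhexagon}(m_1,m_2)$, which after enlarging $P_{\varhexagon}(m_1,m_2)$ if necessary we may assume satisfies $m_1,m_2 \geq 3$, and equip $B'$ with exactly two holes. Given a configuration $C$ of $B'(2)$, I would extend it to a configuration $\widehat{C}$ of $P_{\varhexagon}(2;m_1,m_2)$ by placing one fixed, distinct label on each hexagon of $P_{\varhexagon}(m_1,m_2)\setminus B'$, always using the same labels in the same positions. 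Since both holes lie in $B'$, the board $\widehat{C}$ also has exactly two holes, in the same positions as in $C$.

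The key observation is that every legal slide on $B'$ is verbatim a legal slide on $P_{\varhexagon}(m_1,m_2)$. A hexagonal slide is local: it moves a single tile into one of two adjacent holes, and involves only that tile together with those two holes. When we perform slides inside $B'$, the holes never leave $B'$ because every hexagon of $P_{\varhexagon}(m_1,m_2)\setminus B'$ is permanently occupied; hence the frozen tiles are never disturbed, and each slide in $B'$ lifts unchanged to $P_{\varhexagon}(m_1,m_2)$. Consequently, if $C_1$ and $C_2$ are non-isolated configurations of $B'(2)$ lying in the same connected component of $puz[B'(2)]$ with holes in the same positions, then $\widehat{C_1}$ and $\widehat{C_2}$ are non-isolated configurations of $P_{\varhexagon}(2;m_1,m_2)$ in the same connected component of $puz[P_{\varhexagon}(2;m_1,m_2)]$, again with holes in matching positions.

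Now I would invoke Theorem \ref{paritytheorem}: the permutation $\widehat{\sigma}$ of tile labels taking $\widehat{C_1}$ to $\widehat{C_2}$ is even. By construction $\widehat{\sigma}$ fixes every label placed on $P_{\varhexagon}(m_1,m_2)\setminus B'$, so it is the extension by the identity of the permutation $\sigma$ taking $C_1$ to $C_2$. Appending fixed points to a permutation does not change its sign, so $\operatorname{sgn}(\sigma)=\operatorname{sgn}(\widehat{\sigma})=+1$; that is, $C_2$ is obtained from $C_1$ by an even permutation. This is precisely the weak parity property for $B'(2)$.

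For the final sentence it remains to verify that every board studied in the paper really is a sub-board of some parallelogram. This is immediate for parallelograms and, by deletion, for trimmed parallelograms. In axial coordinates on the hexagonal tessellation, a parallelogram corresponds to a rhombus region, a triangle $T_{\varhexagon}(m)$ to a region of the form $\{q,r\geq 0,\ q+r<m\}$, and a flower $F_{\varhexagon}(m)$ to a hexagonal region bounded by $|q|,|r|,|q+r|<m$; each of these is contained in a large-enough rhombus, so triangles, trimmed triangles, and flowers all embed as sub-boards of parallelograms. I expect this geometric embedding to be the only step requiring genuine care -- the flower case being the least transparent -- since the reduction itself is a short bookkeeping argument once the inclusions are in place. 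Given these inclusions, the weak parity property for each such board follows from the first part.
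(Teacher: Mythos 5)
Your proposal is correct and is essentially the paper's own argument: the paper likewise views $B$ as a sub-board of a parallelogram $P$, observes that a sequence of slides on $B(2)$ is verbatim a sequence of slides on $P(2)$ fixing the tiles outside $B$, and invokes Theorem \ref{paritytheorem} (stated there in contrapositive form, via an alleged odd permutation yielding a contradiction). Your added details --- the explicit freezing of exterior labels, the invariance of sign under extension by fixed points, and the verification that triangles and flowers embed in parallelograms --- are all correct elaborations of steps the paper leaves implicit.
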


\begin{proof}
    Let $B$ be a board which can be embedded in a larger parallelogram-shaped board $P$. Suppose we can apply a sequence of slides to the board $B(2)$ of shape $B$ with exactly two holes, and achieve an odd permutation of the tiles. Viewing $B$ as a sub-board of $P$, we have a sequence of slides which produces an odd permutation of the tiles of $P(2)$, contradicting Theorem \ref{paritytheorem}.
\end{proof}

Now, using also Corollary \ref{cor:patchedboards}, we can finish the proof of Theorem \ref{thm:connectionnumber}.
\begin{proof}
By Theorem \ref{thm:parallelogram} and Corollary \ref{cor:patchedboards}, with three or more holes the puzzle graphs of large-enough parallelogram, triangle or flower-shaped boards are maximally connected. By Theorem \ref{paritytheorem} with only two holes the puzzle graphs of these boards are not maximally connected.
\end{proof}

We now give a variation on the Patching Theorem for boards with exactly two holes, which allows us to use patching to prove the strong parity property.

\begin{thm}(Strong Parity Property via Patching)
    Suppose $B$ is a board which can be written as a union of two smaller boards, or ``patches'', $B_1$ and $B_2,$ such that all of the following hold:
    \begin{enumerate}
        \item $B_1$ and $B_2$ each contain at least five hexagons.
        \item $B_1 \cap B_2$ contains at least four hexagons, at least two of which are adjacent.
        \item When we consider $B_1$ and $B_2$ separately, each one has, with exactly two holes, the strong parity property.
    \end{enumerate}
    Then if $B$ with exactly two holes has the weak parity, it has the strong parity property as well.
    \label{paritypatching}
\end{thm}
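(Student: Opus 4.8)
The plan is to use the assumed weak parity property of $B$ to turn the theorem into a pure connectivity statement, and then to settle that statement by feeding the strong parity of the two patches into a generation lemma for alternating groups. Write $\Omega$ for the set of tile labels and $n=|\Omega|$; since $B\supseteq B_1$ has at least five hexagons and only two holes, $n\geq 3$. Because $B$ already has the weak parity property by hypothesis, it suffices to prove that $puz[B(2)]$ has \emph{exactly} two connected components of non-isolated configurations. The lower bound is immediate: for any non-isolated configuration $C$ and any transposition $(a\,b)$, the configuration $(a\,b)\cdot C$ has the same (adjacent) hole positions, hence is non-isolated, yet differs from $C$ by an odd permutation, so weak parity places it in a different component. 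The real work is the upper bound, that there are \emph{at most} two.

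First I would fix a canonical pair of adjacent hexagons $H_1,H_2$ inside $B_1\cap B_2$, which exist by hypothesis (2), and isolate two ingredients. \emph{(i) Hole mobility:} exactly as in the connectivity arguments of Theorems \ref{thm:skiny} and \ref{thm:parallelogram}, the two-hole ``bubble'' of any non-isolated configuration can be slid (ignoring labels) until the holes occupy $\{H_1,H_2\}$; hence every non-isolated configuration is joined in $puz[B(2)]$ to one whose holes sit at $\{H_1,H_2\}$. \emph{(ii) Patchwise alternating moves:} with the holes parked at $H_1,H_2$, let $S_i\subseteq\Omega$ be the set of tiles then occupying $B_i$, so that $S_1\cup S_2=\Omega$. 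Since $B_i(2)$ has the strong parity property, two non-isolated configurations of $B_i$ with holes at $H_1,H_2$ lie in one component of $puz[B_i(2)]$ precisely when they differ by an even permutation; as slides internal to $B_i$ are slides of $B$ that fix $\Omega\setminus S_i$ and restore the holes to $\{H_1,H_2\}$, this says that every element of $\operatorname{Alt}(S_1)$ and every element of $\operatorname{Alt}(S_2)$ can be realized on $B$ by a sequence of slides returning the holes to $\{H_1,H_2\}$.

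The heart of the argument is the claim that, with the holes at $\{H_1,H_2\}$, \emph{every} even permutation of $\Omega$ can be realized returning the holes to $\{H_1,H_2\}$. For this I would invoke the standard fact that $|S_1\cap S_2|\geq 2$ forces $\langle\operatorname{Alt}(S_1),\operatorname{Alt}(S_2)\rangle=\operatorname{Alt}(S_1\cup S_2)=\operatorname{Alt}(\Omega)$; concretely, for $p,q\in S_1\cap S_2$, $x\in S_1\setminus S_2$ and $w\in S_2\setminus S_1$ one has $(p\,q\,x)(p\,w\,q)=(p\,w\,x)$, a $3$-cycle that crosses between the patches, and conjugating it inside $\operatorname{Alt}(S_1)$ and $\operatorname{Alt}(S_2)$ produces every $3$-cycle of $\Omega$. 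The overlap bound $|S_1\cap S_2|\geq 2$ holds because $B_1\cap B_2$ has at least four hexagons of which only the two holes are unoccupied, leaving at least two tiles in the intersection. Combining this with ingredient (ii)---after each factor the holes are back at $\{H_1,H_2\}$, so the factors may be concatenated---shows that any two configurations with holes at $\{H_1,H_2\}$ that differ by an even permutation are connected; hence the even configurations at $\{H_1,H_2\}$ form a single component, and so do the odd ones.

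Finally I would assemble the count. By (i) every non-isolated configuration is joined to a configuration with holes at $\{H_1,H_2\}$; by the generation argument all such configurations of a fixed parity (relative to a reference configuration with holes at $\{H_1,H_2\}$) are mutually connected; and no component can contain configurations with holes at $\{H_1,H_2\}$ of both parities, as weak parity forbids two configurations differing by an odd permutation from sharing a component. Thus assigning to each component the common parity of its canonical configurations is well defined and injective, which gives at most two components and, with the lower bound, exactly two. The step I expect to be the main obstacle is making ingredient (ii) fully rigorous: one must verify that a permutation of $\operatorname{Alt}(S_i)$ realized through the strong parity of $B_i$ really does fix every tile outside $B_i$ and return both holes to $\{H_1,H_2\}$, so that successive patchwise permutations compose exactly as the corresponding group elements rather than merely as reachability relations. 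Once that bookkeeping is secured, checking the overlap bound and the alternating-group generation lemma is routine.
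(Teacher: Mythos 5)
Your proof is correct and arrives at the right conclusion, but by a genuinely different route from the paper's. The paper also works through the alternating group, but it realizes each individual three-cycle $(a,b,c)$ directly: invoking the Conjugation Lemma, it parks both holes in $B_1 \cap B_2$, splits into cases according to where $a,b,c$ sit, and in the mixed case uses the strong parity of $B_2$ to ferry the stray tile $c$ into the overlap by an even permutation supported on $B_2$ (possible because $B_2$ carries at least three labeled tiles) before applying $(a,b,c)$ inside $B_1$. You instead reduce the statement to a count of components, realize all of $\operatorname{Alt}(S_1)$ and $\operatorname{Alt}(S_2)$ by slide sequences that restore the holes to a fixed adjacent pair in the overlap, and then invoke the generation fact $\langle \operatorname{Alt}(S_1),\operatorname{Alt}(S_2)\rangle=\operatorname{Alt}(\Omega)$ for $|S_1\cap S_2|\geq 2$, which the hypotheses do guarantee (at least four cells in the overlap, at most two of them holes). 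What your version buys is the elimination of the paper's delicate bookkeeping (``move $c$ into $B_1\cap B_2$ without disturbing $B_1\setminus B_2$, keeping $b$ in place if needed'') in favor of the clean identity $(p\,q\,x)(p\,w\,q)=(p\,w\,x)$; the cost is importing a standard but external group-generation lemma, plus the composition bookkeeping you correctly flag, namely that each patchwise realization must return both holes to $\{H_1,H_2\}$ so that successive realizations compose as group elements. The one step you share with the paper, and which both arguments treat as immediate, is that the adjacent hole pair of an arbitrary non-isolated configuration can actually be walked into $B_1\cap B_2$: this is clear for the concrete overlapping-parallelogram patches to which the theorem is applied, but neither argument derives it from hypotheses (1)--(3) alone (for instance, if the two holes initially straddle $B_1\setminus B_2$ and $B_2\setminus B_1$, neither patch contains both holes and neither patch's puzzle graph applies directly), so this is not a defect specific to your write-up.
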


\begin{proof}
  Recall that the alternating group of even permutations is generated by three-cycles $(a,b,c)$. Let $C$ be a non-isolated configuration of the board $B$, and let $a,b,c$ be any three tiles. By the Conjugation Lemma, it suffices to show that there is a configuration $C'$ in the same connected component of the puzzle graph as $C$, such that $(a,b,c) \cdot C'$ is also in the same connected component.
    
    By sliding tiles as needed, we may assume both holes are in the intersection $B_1 \cap B_2$. If $a,b$ and $c$ are in the same patch (say, $B_1$), then we may use the assumptions about the puzzle graph of $B_1$ to apply the three-cycle $(a,b,c)$, without disturbing the rest of the board.
    
    Suppose without loss of generality that $a,b \in B_1$ while $c$ is in $B_2 \backslash B_1$. We may assume further that $a \in B_1 \backslash B_2$, or else we would be in the first case. Note, however, that we may have $b \in B_1 \cap B_2$. We use the assumption on $B_2$ to move tile $c$ into $B_1 \cap B_2$, without disturbing any tiles of $B_1 \backslash B_2$, and in such a way that the two holes remain in $B_1 \cap B_2$, and $b$ remains in $B_1 \cap B_2$ if needed. (Note that we can do this with an even permutation of tiles, since $B_2$ has at least 3 labeled tiles.) We may then use the parity property of $B_1$ to apply the permutation $(a,b,c)$. This proves the lemma.
\end{proof}

We provide now the proof of Theorem \ref{thm:strongparity}. We begin by stating here again Theorem \ref{thm:strongparity}.
\begin{thm*}
    Suppose $B$ is a hexagonal board with any of the following shapes:
    \begin{itemize}
        \item A flower $F_{\varhexagon}(m)$ where $m \geq 3.$
        \item A trimmed parallelogram $P_{\varhexagon}^{tr}(m_1,m_2)$ where $m_1,m_2 \geq 3$ and 
        \[\max\{m_1,m_2\} \geq 4.\]
        \item A trimmed triangle $T^{tr}_{\varhexagon}(m)$ for $m \geq 5$.
    \end{itemize}
    Then $B$ with two holes has the strong parity property.
\end{thm*}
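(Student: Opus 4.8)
The plan is to derive the strong parity property from three ingredients that are already in place: the weak parity property of these boards, the Strong Parity Property via Patching theorem (Theorem \ref{paritypatching}), and a handful of computational base cases. Since flowers, trimmed triangles, and trimmed parallelograms are all sub-boards of parallelograms, Corollary \ref{cor:weakparity} shows that every board $B$ in the statement, taken with two holes, has the weak parity property. By Theorem \ref{paritypatching} it therefore suffices to write each such $B$ as a union of two patches, each containing at least five hexagons and each having the strong parity property with two holes, whose intersection contains at least four hexagons with two of them adjacent. Recall that the conclusion of Theorem \ref{paritypatching} is exactly the strong parity property, so no separate count of connected components is needed. The strong parity of the patches will come either from a strictly smaller board of the same type, via induction, or from a base case checked with the breadth-first search code of Section \ref{section:computations}.

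The first and most important case is that of trimmed parallelograms, which will also serve as building blocks for the other two shapes. I would argue by simultaneous induction on $m_1$ and $m_2$, taking as base cases the smallest admissible boards such as $P_{\varhexagon}^{tr}(4,3)$, $P_{\varhexagon}^{tr}(3,4)$, and $P_{\varhexagon}^{tr}(4,4)$, whose strong parity I verify directly computationally. For the inductive step producing $P_{\varhexagon}^{tr}(m_1+1,m_2)$, I would write the board as the union of a smaller trimmed parallelogram and a short trimmed-parallelogram strip of width three or four (chosen so that the strip itself lies in the strong-parity range, its larger dimension being at least $4$), aligned so that the two patches overlap in several full columns. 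Throughout I must keep track of the two cut corners of each patch: a trimmed patch is missing a tile at each of its two acute corners, and whenever such a corner lands in the interior of the larger board it has to be filled in by the other patch. The overlap must be chosen so that the missing tile at the bottom corner of one patch and the missing tile at the top corner of the other occupy complementary positions, so that the union is exactly $P_{\varhexagon}^{tr}(m_1+1,m_2)$, with tight corners only at its two genuine acute corners. After checking the size hypotheses one applies Theorem \ref{paritypatching}.

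With the trimmed parallelograms established, I would obtain the flowers and trimmed triangles by the same decompositions used in the proof of Corollary \ref{cor:patchedboards}, but with trimmed-parallelogram patches replacing the full parallelograms. For a flower $F_{\varhexagon}(m)$, which has no tight corners, I would first glue two trimmed parallelograms along a common column to form each of the two trapezoidal halves shown in Figure \ref{hexpatch}, and then glue the two trapezoids along the central column to form the flower; for a trimmed triangle $T_{\varhexagon}^{tr}(m)$ I would glue three trimmed parallelograms overlapping in a central triangle. Each gluing is an application of Theorem \ref{paritypatching}: the weak parity of every board and sub-board involved is supplied by Corollary \ref{cor:weakparity}, the size hypotheses are easy to verify once the patches are large enough (guaranteed by $m \geq 3$ for flowers and $m \geq 5$ for triangles), and the strong parity of the intermediate trapezoidal patches is itself obtained by one earlier application of the theorem.

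The main obstacle is the geometric bookkeeping of the cut corners in the trimmed-parallelogram induction: one must verify that the chosen placements of the patches leave no spurious holes in the interior of the union and reproduce precisely the target trimmed shape, all while keeping the overlap at least four hexagons with two adjacent, and while ensuring every patch remains inside the range where strong parity is already known. Once these placements are pinned down for trimmed parallelograms, the flower and trimmed-triangle cases follow from the same patching machinery with only routine changes, and the computational base cases close the induction.
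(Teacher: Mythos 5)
Your proposal is correct and follows essentially the same route as the paper: a computational base case for the smallest trimmed parallelogram, an induction that covers larger trimmed parallelograms by two overlapping trimmed-parallelogram patches via Theorem \ref{paritypatching} (with weak parity supplied by Corollary \ref{cor:weakparity}), and then the flower and trimmed-triangle cases assembled from trimmed-parallelogram patches exactly as in the decompositions of Figure \ref{evenpatch}. Your extra care about the cut corners of the patches is a more explicit account of bookkeeping the paper leaves implicit, but it is not a different argument.
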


\begin{proof}
    
    We begin with the case $P_{\varhexagon}^{tr}(3,4)$. We check this computationally---see discussion in Section \ref{section:computations}. It follows by induction that the result holds for all larger trimmed parallelograms, as we may cover any such board with two smaller trimmed parallelogram patches, whose intersection covers either all but the first and last row of the board, or all but the first and last column, and then apply Theorem \ref{paritypatching}.
    
    The flower $F_{\varhexagon}(m)$ may be constructed by joining two copies of $P_{\varhexagon}^{tr}(m+1,m+1)$ as shown in Figure \ref{evenpatch}, and then joining two of the resulting patches to form a flower. See Figure \ref{evenpatch}. Note that this is analogous to the way we built up flower boards for parallelogram patches in Figure \ref{hexpatch}.
    
    For a trimmed triangle, the smallest case can easily be constructed from two trimmed $3 \times 4$ parallelogram patches. Larger trimmed triangles may be constructed by successively joining three trimmed parallelogram patches, as in Figure \ref{evenpatch}.
\end{proof}

\begin{figure}
    \begin{subfigure}[b]{0.35 \textwidth}
    \centering
    \begin{tikzpicture}[scale=0.5]
    \foreach \i in {0,...,3}{
         \node[preaction={fill, gray!20}, smhex] at ({(0.75 * \i},{(-\i/2)*sin(60)}) {};}  
    \foreach \j in {1,...,3}{
        \foreach \i in {0,...,4}{
            \node[preaction={fill, gray!20}, smhex] at ({(0.75 * \i},{(\j-\i/2)*sin(60)}) {};}  }
    \foreach \i in {1,...,4}{
         \node[preaction={fill, gray!20}, smhex] at ({(0.75 * \i},{(4-\i/2)*sin(60)}) {};} 
    \foreach \j in {0,...,3}{
        \node[smhex, pattern=vertical lines] at (0,{(\j)*sin(60)}) {};
    }
    \foreach \j in {-1,...,3}{
        \foreach \i in {1,...,3}{
            \node[smhex, pattern=vertical lines] at ({(0.75 * \i},{(\j+\i/2)*sin(60)}) {};}  } 
    \foreach \j in {0,...,3}{
        \node[smhex, pattern=vertical lines] at (3,{(1+\j)*sin(60)}) {};
    }
    \begin{scope}[xshift=5 cm]
    \foreach \i in {0,...,3}{
        \pgfmathsetmacro\last{3+\i} 
        \foreach \j in {0,...,\last}{
            \node[preaction={fill, gray!20}, smhex]  at ({(0.75 * \i},{(\j-\i/2)*sin(60)}) {};}  }
    \foreach \j in {0,...,5}{
        \node[preaction={fill, gray!20}, smhex] at (3,{(\j-1)*sin(60)}) {};
    }
    \foreach \j in {0,...,5}{
        \node[pattern=vertical lines, smhex] at (1.5,{(\j-1)*sin(60)}) {};
    }
    \foreach \i in {3,...,6}{
        \pgfmathsetmacro\first{\i-4}
        \foreach \j in {\first,...,5}{
            \node[pattern=vertical lines, smhex]  at ({(0.75 *\i},{(\j-\i/2+1)*sin(60)}) {};}  }
    \end{scope}
    \end{tikzpicture}
    \caption{Building a flower in two steps.}
    \end{subfigure}
    \hspace{0.1 in}
    \begin{subfigure}[b]{0.2 \textwidth}
    \centering
    \begin{tikzpicture}[scale=0.5]
    \foreach \j in {0,...,2}{
        \foreach \i in {1,...,2}{
            \node[preaction={fill, gray!20}, smhex] at ({(0.75 * \i},{(\j-\i/2)*sin(60)}) {};}  }
    \foreach \j in {1,...,2}{
        \node[preaction={fill, gray!20}, smhex] at (0,{((\j-1)*sin(60)}) {};
    }
    \foreach \j in {1,...,2}{
        \node[preaction={fill, gray!20}, smhex] at (2.25,{((\j-1.5)*sin(60)}) {};
    }
    \foreach \j in {0,...,1}{
        \node[pattern=vertical lines, smhex] at (0,{(\j*sin(60)}) {};
    }
    \foreach \j in {0,...,1}{
        \node[pattern=vertical lines, smhex] at (2.25,{(\j+0.5)*sin(60)}) {};
    }
    \foreach \j in {-1,...,1}{
        \foreach \i in {1,...,2}{
            \node[smhex, pattern=vertical lines] at ({(0.75 * \i},{(\j+\i/2)*sin(60)}) {};}  } 
    \end{tikzpicture}
    \caption{Building $T_{\varhexagon}^{tr}(5)$.}
    \end{subfigure}
    \hspace{0.1 in}
    \begin{subfigure}[b]{0.35 \textwidth}
    \centering
    \begin{tikzpicture}[scale=0.5]
    \foreach \j in {0,...,3}{
        \foreach \i in {1,...,2}{
            \node[preaction={fill, gray!20}, smhex] at ({(0.75 * \i},{(\j-\i/2)*sin(60)}) {};}  }
    \foreach \j in {1,...,3}{
        \node[preaction={fill, gray!20}, smhex] at (0,{((\j-1)*sin(60)}) {};
    }
    \foreach \j in {1,...,3}{
        \node[preaction={fill, gray!20}, smhex] at (2.25,{((\j-1.5)*sin(60)}) {};
    }
    \foreach \j in {1,...,3}{
        \node[pattern=vertical lines, smhex] at (0,{(\j*sin(60)}) {};
    }
    \foreach \j in {1,...,3}{
        \node[pattern=vertical lines, smhex] at (2.25,{(\j+0.5)*sin(60)}) {};
    }
    \foreach \j in {0,...,3}{
        \foreach \i in {1,...,2}{
            \node[smhex, pattern=vertical lines] at ({(0.75 * \i},{(\j+\i/2)*sin(60)}) {};}  } 
    \begin{scope}[xshift=2.75 cm]
    \foreach \i in {0,...,2}{
        \pgfmathsetmacro\xcor{3+0.75*\i}
        \pgfmathsetmacro\last{3+\i} 
        \foreach \j in {0,...,\last}{
            \node[preaction={fill, gray!20}, smhex]  at ({\xcor},{(\j-\i/2)*sin(60)}) {};}  }
    \foreach \j in {0,...,4}{
        \node[preaction={fill, gray!20}, smhex] at
        (5.25,{(\j-0.5)*sin(60)}) {};
    }
    \foreach \i in {2,...,4}{
        \foreach \j in {1,...,\i}{
            \node[pattern=vertical lines, smhex]  at ({(0.75 *\i},{(1+\j-\i/2)*sin(60)}) {};}}
    \foreach \i in {5,...,6}{
        \pgfmathsetmacro\start{\i-3}
        \foreach \j in {\start,...,4}{
           \node[pattern=vertical lines, smhex]  at ({(0.75 *\i},{(1+\j-\i/2)*sin(60)}) {};}}
        \end{scope}
    \end{tikzpicture}
    \caption{Building $T_{\varhexagon}^{tr}(m)$ for $m>5$.}
    \end{subfigure}
    \caption{Building boards from trimmed parallelogram patches.}
    \label{evenpatch}
\end{figure}
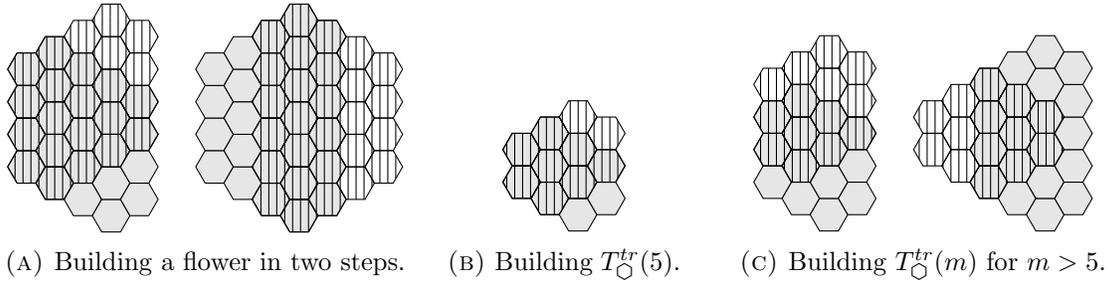

Can now easily prove Theorem \ref{thm:countcomponents}, which we restate below.
\begin{thm*}
    If the board $B$ is
    \begin{enumerate}
        \item A parallelogram $P_{\varhexagon}(m_1,m_2)$, where $m_1,m_2\geq 3$ and $\max\{m_1,m_2\} \geq 4.$
        \item A triangle
        $T_{\varhexagon}(m)$ where $m \geq 5$.
    \end{enumerate}
    Then the number of connected components of $puz[B(2)]$ containing non-isolated configurations is given by
    \begin{itemize}
        \item $\displaystyle 4  \binom{m_1m_2-2}{2}$ for a parallelogram,
        \item $\displaystyle 12 \binom{m(m+1)/2-2}{3}$ for a triangle.
    \end{itemize}
\end{thm*}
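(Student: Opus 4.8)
The plan is to assemble this count directly from Lemma \ref{corner_count} and the strong parity property of Theorem \ref{thm:strongparity}; almost all of the genuine work has already been carried out in those two results, so the task here is one of correct bookkeeping. Lemma \ref{corner_count} already expresses the number of connected components of $puz[B(2)]$ containing non-isolated configurations in terms of the number $c$ of such components for the \emph{trimmed} board $B'$: it gives $2c\binom{m_1m_2-2}{2}$ for a parallelogram and $6c\binom{m(m+1)/2-2}{3}$ for a triangle. The sole quantity left undetermined by that lemma is the value of $c$ for the trimmed boards, so the proof reduces to pinning down $c$.

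Next I would invoke the definition of the strong parity property: a board with this property has a puzzle graph with exactly two connected components containing all non-isolated configurations. Theorem \ref{thm:strongparity} asserts precisely that the trimmed parallelogram $P_{\varhexagon}^{tr}(m_1,m_2)$ (for $m_1,m_2 \geq 3$ with $\max\{m_1,m_2\}\geq 4$) and the trimmed triangle $T_{\varhexagon}^{tr}(m)$ (for $m \geq 5$) have the strong parity property with two holes removed. Consequently, for each of the trimmed boards appearing in the present theorem we have $c = 2$. Substituting $c = 2$ into the two formulas of Lemma \ref{corner_count} then yields $4\binom{m_1m_2-2}{2}$ for the parallelogram and $12\binom{m(m+1)/2-2}{3}$ for the triangle, exactly as claimed.

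The only point demanding attention, and the closest thing to an obstacle, is to verify that the size hypotheses of the two ingredients are simultaneously satisfied. Lemma \ref{corner_count} applies to parallelograms with $m_1,m_2 \geq 3$ and to triangles with $m \geq 4$, whereas the strong parity conclusion of Theorem \ref{thm:strongparity} requires the slightly stronger conditions $\max\{m_1,m_2\}\geq 4$ and $m \geq 5$. Since the hypotheses of this theorem already include these stronger conditions, both results are in force at once, $c = 2$ is legitimate in every case, and the two formulas combine with no loss. Thus the proof is a short deduction rather than an independent argument, and there is no deeper difficulty to confront.
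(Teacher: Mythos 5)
Your proposal is correct and is essentially identical to the paper's proof, which reads in full: "Applying Theorem \ref{thm:strongparity}, we can use $c=2$ in Lemma \ref{corner_count}." Your additional check that the size hypotheses of the two ingredients are simultaneously satisfied is a welcome bit of care that the paper leaves implicit.
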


\begin{proof}
    Applying Theorem \ref{thm:strongparity}, we can use $c = 2$ in Lemma \ref{corner_count}.
\end{proof}

\section{Computational results for smaller boards}
\label{section:computations}

We now check computationally the smaller triangular, parallelogram, trimmed parallelogram, and flower-shaped boards. We note that it is not necessary to compute the entire puzzle graph to determine the number of connected components. Indeed, computing a single connected component is enough, as we explain below. Python code for computing a list of all configurations in a sample connected component, as well as the complete output files from our computations, can be found at our Github repository \cite{hexagonSoftware}.

\begin{prop}
Let $B(h)$ be a board with $h$ holes, and let $S$, $S'$ be two connected components of $puz[B(h)]$. Then there is a graph isomorphism from $S$ to $S'$ which preserves the locations of the holes in a configuration.
\end{prop}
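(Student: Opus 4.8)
The plan is to realize the isomorphism as a relabeling of tiles. Write $n=|B|-h$ for the number of tiles, and let the symmetric group $S_n$ act on the vertex set of $puz[B(h)]$ by permuting the tile labels of a configuration while leaving every hole fixed in place. The crucial observation is that this action is by graph \emph{automorphisms}: whether a given slide is legal depends only on the geometric positions of the holes and tiles involved (a hexagonal tile may slide precisely when it is adjacent to two adjacent holes), and never on the labels the tiles happen to carry. Hence if $C \to D$ is an edge of the puzzle graph and $\sigma \in S_n$, then the same geometric slide realizes an edge $\sigma\cdot C \to \sigma\cdot D$, so $\sigma$ is an automorphism of $puz[B(h)]$. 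By construction each $\sigma$ fixes the hole positions of every configuration.

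First I would reduce the whole statement to a single combinatorial fact: that $S$ and $S'$ contain configurations with the \emph{same} hole positions. Indeed, suppose $C \in S$ and $C' \in S'$ have identical hole sets. On the $n$ occupied positions, $C$ and $C'$ each determine a bijection from positions to labels, so there is a unique $\tau \in S_n$ with $\tau \cdot C = C'$. Being a graph automorphism, $\tau$ carries the connected component of $C$ bijectively and adjacency-preservingly onto the connected component of $\tau \cdot C = C'$; that is, $\tau$ restricts to a graph isomorphism $S \to S'$, and since every element of $S_n$ fixes holes, this isomorphism preserves hole locations. Thus the proposition comes down to producing one such matched pair $C \in S$, $C' \in S'$.

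To produce the matched pair I would maneuver the holes into a common standard position, and here the same label-independence is the engine. Read purely at the level of hole positions, a slide moves the holes in a way that does not see the tile labels; consequently the set of hole-patterns reachable from a configuration depends only on its starting hole-pattern, not on how the tiles are labeled. So within $S$ I may slide the holes into any pattern reachable from the holes of some $C$, and likewise within $S'$. Fixing a canonical hole-pattern — for the boards under study, all holes driven into a designated corner cluster — yields configurations $C \in S$ and $C' \in S'$ sharing that pattern, completing the argument. I expect this last step to be the main obstacle: it requires verifying that the holes can always be routed to the canonical pattern, i.e.\ that the relevant configurations all lie over a single connected component of the hole-pattern graph. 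For the parallelogram, triangle, and flower boards treated here, with at least two holes present so that the components in question contain non-isolated configurations, this hole-routing is exactly the kind of free hole motion already exploited in the proofs of Theorems \ref{thm:skiny} and \ref{thm:parallelogram}; but it is the one place where the geometry of the board genuinely enters, and it is where the restriction to non-isolated components is used.
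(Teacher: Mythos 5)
Your argument is essentially the paper's: its proof likewise observes that permuting tile labels commutes with sliding, so the relabeling taking $C$ to $C'$ is a graph automorphism of $puz[B(h)]$ fixing hole positions and hence restricts to a hole-preserving isomorphism between their components. You go one step further by explicitly justifying that the two components actually contain configurations with matching hole positions (routing the holes to a canonical pattern, which is where non-isolatedness and the board geometry enter); the paper's proof silently assumes this by starting from two configurations that already have identical hole sets, so your extra care closes a small gap without changing the approach.
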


\begin{proof}
Consider two configurations $C$ and $C'$ of $B(h)$ which have holes in precisely the same positions, but which do not lie in the same component of $puz[B(h)]$. Note that  permuting the tile labels commutes with sliding tiles on the board. Hence the permutation of labels which transforms $C$ into $C'$ induces a graph isomorphism between the connected component of $puz[B(h)]$ containing $C$ and the component containing $C'$. Moreover, this isomorphism preserves the positions of the holes in a configuration. 
\end{proof}

\begin{cor}\label{counting_trick}
Let $t$ be the number of tiles of $B(h)$. The number of connected components of $puz[B(h)]$ containing non-isolated configurations is $t!$ divided by the number of configurations in any such connected component of $puz[B(h)]$ which have their holes in a specified set of positions.
\end{cor}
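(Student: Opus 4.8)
The plan is to exploit the free action of the symmetric group $S_t$ on configurations by permutation of tile labels. First I would fix the ``specified set of positions'' to be the hole–positions of some \emph{non-isolated} configuration (which we may do whenever non-isolated configurations exist), and let $\mathcal{F}$ denote the set of all configurations of $B(h)$ whose holes occupy exactly those positions. Since such a configuration is determined by how the $t$ labels are distributed over the $t$ non-hole positions, we have $|\mathcal{F}| = t!$, and every element of $\mathcal{F}$ is non-isolated by our choice of positions. As noted in the proof of the preceding Proposition, permuting tile labels commutes with sliding tiles, so $S_t$ acts on $puz[B(h)]$ by graph automorphisms; this action is free (a nontrivial relabeling changes the configuration) and leaves hole-positions unchanged, hence preserves both $\mathcal{F}$ and the property of being non-isolated.

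Next I would use the preceding Proposition to see that the number $N$ in the statement is well defined. That Proposition supplies, for any two components $S,S'$, a graph isomorphism preserving the positions of the holes; restricted to configurations whose holes lie in our specified positions, it is a bijection between $\mathcal{F}\cap S$ and $\mathcal{F}\cap S'$. Consequently every component containing non-isolated configurations meets $\mathcal{F}$ in the \emph{same} number $N$ of configurations, so ``the number of configurations in any such component with holes in the specified positions'' is unambiguous. Moreover $N\geq 1$: since $\mathcal{F}\neq\varnothing$ and each of its elements is non-isolated, at least one non-isolated component meets $\mathcal{F}$, and by the hole-preserving isomorphism every non-isolated component then meets it as well.

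Finally I would count. Each configuration in $\mathcal{F}$ is non-isolated and so lies in exactly one component containing non-isolated configurations; thus $\mathcal{F}=\bigsqcup_{S}(\mathcal{F}\cap S)$, the union taken over components $S$ containing non-isolated configurations, and each block $\mathcal{F}\cap S$ has size $N$. Hence $t!=|\mathcal{F}|$ equals $N$ times the number of such components, which yields the claimed value $t!/N$. Equivalently, one can phrase this via orbit–stabilizer: $S_t$ acts transitively on the components meeting $\mathcal{F}$ (since it already acts transitively on $\mathcal{F}$), and the stabilizer of such a component acts simply transitively on its $N$ configurations lying in $\mathcal{F}$, so the number of components is $|S_t|/N=t!/N$. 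I expect the only genuine subtlety — the main obstacle — to be the verification that $N$ is constant across all relevant components and that $\mathcal{F}$ never meets an isolated component; both are handled by invoking the preceding Proposition and by taking the specified positions to be the holes of a non-isolated configuration, after which the count is pure bookkeeping.
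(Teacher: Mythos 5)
Your proposal is correct and follows essentially the same route as the paper: you count the $t!$ configurations with holes in the specified (non-isolated) positions, use the hole-preserving isomorphisms of the preceding Proposition to see that these are equidistributed among the components containing non-isolated configurations, and divide. The extra care you take (well-definedness of $N$, $N\geq 1$, and the orbit--stabilizer rephrasing) is a harmless elaboration of the same argument.
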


\begin{proof}
Suppose we choose a set of positions for the holes of $B(h)$ which yields a non-isolated configuration, and call any configuration of $B(h)$ with holes in the specified positions a \emph{home configuration}. Let $t$ be the number of tiles on the board $B(h)$. The total number of home configurations of $B(h)$ is hence $t!$, the number of possible permutations of the tile labels. By the argument in the preceding paragraph, each connected component $puz[B(h)]$ containing non-isolated configurations contains the same number of these home configurations. Thus the number of such connected components is $t!$ divided by the number of home configurations found in a given component.
\end{proof}

\subsection{The board $F_{\varhexagon}(2)$}

We first give our computational results for the flower-shaped board $F_{\varhexagon}(2)$, which has six hexagonal tiles arranged in a ring around a central tile. We include here the total number of configurations in a connected component of the puzzle graph, to convey the difficulty level of the corresponding sliding puzzles.

\begin{table}[h!]
\begin{tabular}{|c|c|c|}
\hline
Holes & Components & Component Size\\ \hline
2 & 24 & 60 \\ \hline
3 & 6 & 132 \\ \hline
4 & 1 & 210\\ \hline
\end{tabular}
\caption{Results for $F_{\varhexagon}(2)$.} 
\end{table}

\subsection{Small triangular boards}

We now give results for triangular board $T_{\varhexagon}(h; m)$, where $m=2,3,4$. For each $m$, we check values of $h$ starting at $2$, until we reach the connected number of $Tr_{\varhexagon}(m)$. Again, components here refer to components of the puzzle graph containing non-isolated configurations.

\begin{table}[h!]
\begin{tabular}{|c|c|c|c|}
\hline
 $m$ & Holes & Components & Component Size\\ \hline
2 & 2 & 1 & 3 \\ \hline
3 & 2 & 24 & 9\\ \hline
3 & 3 & 6 & 19\\ \hline
3 & 4 & 1 & 30\\ \hline
4 & 2 & 8064 &  90\\ \hline
4 & 3 & 1 & 498960 \\ \hline
\end{tabular}
\caption{Results for $T_{\varhexagon}(m)$.}
\end{table}

The results for $m=2$ can be found easily by inspection. For $m=3$, we use Python to compute a single component, and apply Corollary \ref{counting_trick}. We use this approach as well for the case $m=4, h=3$. 

For the case $m=4$, $h=2$, we instead applied Lemma \ref{corner_count}, together with our results on  $F_{\varhexagon}(2; 2),$ to give the number of connected components. To obtain the number of configurations in a component, note that for $F_{\varhexagon}(2; 2)$ we have 60 configurations per component, with 5 configurations for each possible position of the two adjacent holes. This gives 60 configurations of the triangular board, where no holes are found in the tight corners. For each corner, there are 5 possible configurations of the board where the two holes are adjacent to the given corner, and we may then slide a tile from the tight corner into either of the holes. Hence we have 10 configurations with a hole in the given corner, giving 30 more configurations, for a total of 90. This counting method was less time-consuming than running the needed computations on the author's laptop.

\subsection{Parallelograms, trimmed parallelograms, and parity}

Computationally, we find that $P_{\varhexagon}(3; 3,3)$ has a puzzle graph that is maximally connected, while $P^{tr}_{\varhexagon}(2;3,4)$ has a puzzle graph with exactly two connected components containing non-isolated configurations.  By the proposition below, it follows that $P^{tr}_{\varhexagon}(2;3,4)$ has the strong parity property.

\begin{prop}
If $B(h)$ has the weak parity property, and $puz[B(h)]$ has exactly two connected components containing non-isolated configurations, then $B(h)$ has the strong parity property. 
\end{prop}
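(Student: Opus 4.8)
The plan is to reduce the statement to the definition of the strong parity property and then make the one substantive direction explicit using the counting machinery already in place. Recall that the strong parity property means weak parity together with the puzzle graph having exactly two components that contain all non-isolated configurations. Since weak parity is assumed, the first thing I would observe is that the hypothesis ``exactly two components contain non-isolated configurations'' already delivers the remaining clause: every non-isolated configuration lies in some component, and any component meeting a non-isolated configuration is by hypothesis one of the two given ones, so all non-isolated configurations are distributed among exactly these two components.

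To expose the parity content rather than leaving the argument purely formal, I would fix a set $P$ of hole positions containing two adjacent holes, so that every one of the $t!$ configurations with holes at $P$ is non-isolated (non-isolatedness depends only on the hole positions, not on the tile labels). All of these configurations therefore lie in the two components, say $S_1$ and $S_2$. Fixing one such configuration as a reference, the $t!$ configurations split into two parity classes of size $t!/2$, according to whether they differ from the reference by an even or an odd permutation. By the weak parity property, any two configurations with holes at $P$ lying in the same component differ by an even permutation, so each $S_i$ can meet at most one parity class.

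Next I would pin down the sizes. By the preceding isomorphism proposition each component contains the same number of configurations with holes at $P$, and by Corollary \ref{counting_trick} this common number equals $t!$ divided by the number of components containing non-isolated configurations, namely $t!/2$. Thus $S_1$ and $S_2$ each contain exactly $t!/2$ configurations with holes at $P$; since each meets at most one parity class and each parity class has exactly $t!/2$ elements, $S_1$ is precisely one parity class and $S_2$ the other. Consequently two configurations with holes at $P$ lie in the same component if and only if they differ by an even permutation, which is exactly the strong parity characterization for this hole placement. As $P$ was an arbitrary non-isolated hole placement and weak parity is assumed throughout, the board has the strong parity property.

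I do not expect a serious obstacle here: the argument is essentially bookkeeping built on the definition of strong parity and on Corollary \ref{counting_trick}. The only points requiring care are the initial reduction---checking that ``two components containing non-isolated configurations'' really accounts for \emph{all} non-isolated configurations rather than merely some of them---and the small observation that choosing $P$ with an adjacent pair of holes forces every configuration with holes at $P$ to be non-isolated, so that none of them escapes into a singleton isolated component and the count $t!/2$ per component is exactly right.
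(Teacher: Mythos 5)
Your argument is correct and follows essentially the same route as the paper: both use the fact that the weak parity property confines each component to a single parity class, and then use the counting from Corollary \ref{counting_trick} (each of the two components contains exactly $t!/2$ configurations with holes in a fixed non-isolated placement, matching the size of a parity class) to conclude that one component carries exactly the even permutations and the other exactly the odd ones. Your version is simply more explicit about fixing the hole placement $P$ and checking that non-isolatedness depends only on the hole positions, which the paper leaves implicit.
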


\begin{proof}
Note that the number of even permutations of $t$ tiles is $t!/2$. Hence for each of the two components of $puz[B(h)]$ containing non-isolated configurations, the number of configurations with holes in a specified set of positions is equal to the number of even (respectively odd) permutations of the tile labels. It follows that all possible even permutations of the tile labels must be found in one large connected component, while all possible odd permutations must be found in the other.
\end{proof}

\subsection{God's number for small boards}

In addition to finding the number of configurations in a sample connected component of the puzzle graph, our Python code allows us to find bounds on God's number for small puzzles. Our code uses a breadth-first search (BFS) algorithm. We begin with a starting configuration, and iterate as follows. At the $n^{th}$ iteration, we start with a list of configurations at distance less than or equal to $n$ from the starting configuration. We then find any neighbors of the configurations at distance $n$ from the start, which are not already on the list. These are precisely the configurations which are at distance $n+1$ from the original configuration. We add these new configurations to our list, and repeat the process. The algorithm terminates when it no finds any new configurations, meaning that we have reached the farthest point in the puzzle graph from our original configuration. Hence the algorithm computes a spanning tree for the component of $puz[B(h)]$ containing our starting configuration, with the starting configuration as a root.

We note that God's number is the minimum depth of a tree computed using the BFS algorithm, over all possible starting configurations. By symmetry, it is enough to check configurations with all possible starting positions of the holes, as the trees for two configurations with holes in the same starting locations will be isomorphic.

\begin{prop}
    Let $d$ be the depth of the tree computed by our BFS algorithm
    when building a component of the puzzle graph $puz[B(h)]$, starting from configuration $C$. Then God's number for the board $B(h)$ is bounded below by $d$, and above by $2d$
\end{prop}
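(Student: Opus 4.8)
The plan is to identify the BFS depth $d$ with a standard graph-theoretic quantity. Since the breadth-first search rooted at $C$ assigns to every configuration in the component its shortest-path distance from $C$, and the depth of the resulting tree is by definition the largest such distance, $d$ is exactly the \emph{eccentricity} of $C$ within its connected component: $d = \max_{C'} d(C,C')$, where the maximum ranges over all configurations $C'$ in the component and $d(\cdot,\cdot)$ denotes graph distance. God's number, on the other hand, is the diameter of the component, namely $G = \max_{C_1,C_2} d(C_1,C_2)$ over all pairs of configurations. By the symmetry observation made earlier (all components of non-isolated configurations have the same God's number), it suffices to argue entirely within the single component containing $C$.

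For the lower bound I would simply note that the pairs of the form $(C,C')$ are among all pairs $(C_1,C_2)$, so the eccentricity of $C$ cannot exceed the diameter. That is, $d = \max_{C'} d(C,C') \le \max_{C_1,C_2} d(C_1,C_2) = G$, which gives $d \le G$. For the upper bound I would invoke the triangle inequality for graph distance, which is valid because the puzzle graph is undirected (every permitted slide is reversible, so each edge may be traversed in either direction). For any two configurations $C_1,C_2$ in the component, routing through $C$ gives $d(C_1,C_2) \le d(C_1,C) + d(C,C_2) \le d + d = 2d$, where each term is bounded by the eccentricity $d$. Taking the maximum over all pairs yields $G \le 2d$, completing the two-sided bound.

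There is no genuine obstacle here; the statement is a direct consequence of the relationship between eccentricity, diameter, and the triangle inequality. The only two points deserving a sentence of care are (i) confirming that the BFS depth equals the eccentricity of the root, rather than some larger count such as the number of iterations plus overhead, and (ii) justifying the triangle inequality, which rests on the reversibility of slides that makes $d$ a genuine metric. Both are routine, so I expect the final write-up to be just a few lines.
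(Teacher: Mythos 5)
Your proof is correct and follows essentially the same route as the paper's: the lower bound comes from the deepest BFS node realizing distance $d$ from $C$, and the upper bound from routing any pair of configurations through the root $C$ (the paper phrases this as concatenating two paths of length at most $d$ in the spanning tree, which is exactly your triangle-inequality step), with the final appeal to the isomorphism of components. Your explicit framing in terms of eccentricity versus diameter is a slightly cleaner packaging of the identical argument.
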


\begin{proof}
    Since $d$ is the distance from $C$ to some configuration $C'$, $d$ is certainly a lower bound on God's number. 
    
    We can find a path between any two configurations $D$, $D'$ by first finding a path from $D$ to $C$ in the spanning tree, and then a path from $C$ to $C'$. Since every configuration is a distance of at most $d$ from $C$, concatenating these two paths gives a path of length at most $2d$ from $D$ to $D'$. 
    The result follows, since all components of $puzz[B(h)]$ containing non-isolated configurations are isomorphic.
\end{proof}

Using our BFS algorithm, we found the following bounds for God's number, using the arguments given above. We conjecture that in many cases, the depth of a single tree is in fact God's number for the board overall, but have not verified this computationally. 

\begin{table}
\begin{tabular}{|c|c|c|c|}
    \hline
    Board & Holes & Lower Bound & Upper Bound \\ \hline
    $F_{\varhexagon}(3)$ & 2 & 16 & 32\\ \hline
    $F_{\varhexagon}(3)$ & 3 & 12 & 24\\ \hline
    $F_{\varhexagon}(3)$ & 4 & 8 & 16\\ \hline
    $T_{\varhexagon}(3)$ & 2 & 3& 6\\ \hline 
    $T_{\varhexagon}(3)$ & 3 & 4 & 8\\ \hline
    $T_{\varhexagon}(3)$ & 4 & 6 & 12\\ \hline
    $T_{\varhexagon}(4)$ & 2 &17 & 34\\ \hline
    $T_{\varhexagon}(4)$ & 3 & 56 & 112\\ \hline
    $P_{\varhexagon}(3,3)$ & 3 & 51 & 102\\ \hline
    $P_{\varhexagon}^{tr}(3,4)$ & 2 & 78 & 156\\ 
    \hline
\end{tabular}
\caption{Bounds on God's number for some small boards.}
\end{table}

\section{Final Remarks and Open problems}

In our GitHub repository \cite{hexagonSoftware} we provide 3D models for the small hexagonal boards that we have studied in this paper. These have been kindly developed by Henry Segerman and can be use for 3D printing and playing with some of the hexagonal sliding puzzles that we have studied here. 

Recently \cite{alpert2020discrete}, the asymptotic growth of God's number of $puz[R_{\square}(h;m)]$ with $h \geq 2$, and $puzz[P_{\varhexagon}(h; m)]$ with $h \geq 6$ and $m=k^2$ for $k\geq1$ has been determined up to a constant factor. In these cases, the puzzle graph is maximally connected. It remains open to find God's number for the family of boards that we have studied here. Here, we have given bounds for the God's numbers for some small hexagonal boards. It is possible to use the algorithms that we have implemented to find the exact values of the God's number for these sliding puzzles, by performing an exhausting search.

For generating the puzzle graph, labeled tiles are only allowed to be on positions determined by the vertices of the dual graph of the underlying tessellation. A higher dimensional topological space that contains the puzzle graph is generated by allowing the tiles to move continuously on the board as long as two tiles do not overlap. This topological space has been defined and studied for square sliding puzzles on boards with a rectangular shape; it is called the configuration space of hard squares on a rectangle \cite{alpert2021configuration}. Recently \cite{alpert2021configuration}, it has been proven that the configuration space of the 15 Puzzle deformation retracts to the puzzle graph of  $R_{\square}(1;4,4)$. More generally, in this paper it was proven that the configuration space of rectangular shaped boards $R_{\square}(1;m_1,m_2)$ deformation retracts to a one-dimensional subspace homeomorphic to $puz[R_{\square}(1;m_1,m_2)]$. This relationship between the puzzle graph and the configuration space of $R_{\square}(h;m_1,m_2)$ holds no longer true for $h\geq 2$ that is when maximal connectivity happens. In the case of the hexagonal sliding puzzles that we have studied here, it could be the case that before maximal connectivity the same topological relationship will hold between the puzzle graph and the configuration space.

\section{Acknowledgments}
We thank Hannah Alpert and Ulrich Bauer for useful conversations about configuration spaces of sliding puzzles. This project received funding from the European Union’s Horizon 2020 research and innovation program under the Marie Skłodowska-Curie grant agreement No. 754462.

\bibliography{hexagons}
\bibliographystyle{plain}

\end{document}